\pgfplotsset{
    tick label style={
        /pgf/number format/use comma=false,
        /pgf/number format/1000 sep={}
    }
}
\newcommand{\Real}{\mathbb{R}}
\newcommand{\paren}[1]{\left( #1 \right)}
\newtheorem{theorem}{Theorem}[section]
\newtheorem{corollary}{Corollary}[section]
\newtheorem{lemma}{Lemma}[section]
\newtheorem{definition}{Definition}[section]
\newtheorem{hypothesis}{Hypothesis}
\theoremstyle{remark}
\newtheorem*{remark}{Remark}
\definecolor{customOrange}{HTML}{EB8934}
\definecolor{customTeal}{HTML}{538A96}
\definecolor{customBlue}{HTML}{34C9EB}
\definecolor{customBrown}{HTML}{6B5B4D}
\begin{document}

\begin{frontmatter}
% \title{Accelerating High-Fidelity Fixed Point Schemes with On-the-fly Reduced Order modelling}
\title{General Framework and Error Estimates for ROM-accelerated Fixed Point Iterations}

\author[GIREF]{Philippe-André Luneau}
\ead{philippe-andre.luneau.1@ulaval.ca}
\author[GIREF]{Jean Deteix}
\address[GIREF]{Groupe Interdisciplinaire de Recherche en \'El\'ements Finis de l'Universit\'e Laval, D\'epartement de Math\'ematiques et Statistique, Universit\'e Laval, Qu\'ebec, Canada}
\cortext[cor1]{Corresponding author}

\begin{abstract}
% % FIXED: "need to do" -> "need for"; "high fidelity" -> "high-fidelity" (compound adjective)
% A general framework for accelerating fixed point schemes for problems related to partial differential equations is presented in this article. The speedup is obtained by training a reduced-order model on-the-fly, removing the need for an offline training phase and any dependence on a precomputed reduced basis (e.g.\ a fixed geometry or mesh). The surrogate model can adapt itself along the iterations because of a novel error criterion based on error propagation, ensuring the high-fidelity character of the converged result. Convergence results are given for a general class of fixed point problems with complex dependence structures between multiple auxiliary linear systems. The proposed algorithm is applied to the solution of a system of coupled partial differential equations. The speedups obtained are significant, and the output of the method can be considered high-fidelity when compared to the reference solution.

Whether it is for solving nonlinear equations, optimization problems, or autonomous dynamical systems, fixed-point-type iterations are widely used in numerical sciences. 
On-the-fly reduced-order modelling (ROM) enables the construction of a low-dimensional, self-correcting approximation of the solution to this system during the iterative process, while removing the need to do an offline training phase and any dependence on a precomputed reduced basis (e.g., a fixed geometry or mesh). This technique has been used in specific fields before, including fluid-structure interactions and topology optimization, but no general study of this method has been done to the knowledge of the authors. A general method for accelerating fixed point schemes will be presented. We show that when the iteration mapping is contractive, the error of the approximate solution is guaranteed to be within the user-defined tolerance using inexact fixed-point theory. This methodology is then applied to the solution of systems of PDEs with a block Gauss-Seidel scheme. Errors due to the ROM are propagated through each iteration with respect to the computational graph of the system, which allows one to estimate whether the current iteration is still within the user-defined tolerance. Some working hypotheses necessary to observe a significant speedup and the limitations of the method are explored as well.  As a numerical illustration, the methodology is applied to a multiphysics lid-driven cavity flow in two dimensions and a multiphysics problem with an industrial geometry in three dimensions.

\end{abstract}

\begin{keyword}
Nonlinear Equations \sep Iterative Methods \sep Model Order Reduction \sep Proper Orthogonal Decomposition \sep Algorithms
\end{keyword}

\end{frontmatter}

\section{Introduction}\label{sec1}

Acceleration of iterative schemes by reusing previously computed data is not a new idea, and has been studied during the last decades in multiple contexts. It is still a subject with a lot of potential in modern research~\citep{Saad_2025}. This article formalizes the already-known methodology to accelerate fixed-point schemes arising from the determination of solutions to partial differential equations (PDEs) using reduced-order models (ROMs) in the framework of inexact fixed point iterations~\citep{Alfeld_1982,Birken_2015}, and we propose a way to rigorously propagate the error throughout the iterations.

In optimization, especially in structural optimization, the concept of approximate reanalysis, consisting of using previous designs to reduce the computational cost of solving the state equation has been introduced by Kirsch in the eighties~\citep{Kirsch_1982}. This led to the development of the method of combined approximations~\citep{Kirsch_1999}, which has been shown to be equivalent to a particular case of the preconditioned conjugate gradient method~\citep{Kirsch_Kocvara_Zowe_2002}. These methods were generalized to topology optimization (TO) problems by Amir and Sigmund~\citep{Amir_Bendsøe_Sigmund_2009,Amir_Stolpe_Sigmund_2010,Amir_Sigmund_Lazarov_Schevenels_2012}. More recently, data-driven techniques have been used to learn a ROM on-the-fly during the optimization procedure, e.g.\ orthogonalization of previous iterates with Gram-Schmidt~\citep{Gogu_2015} and hyperreduction techniques like proper orthogonal decomposition (POD) for steady~\citep{Xiao_Lu_Breitkopf_Raghavan_Dutta_Zhang_2020,Xiao_Jun_Lu_Raghavan_Zhang_2022} and unsteady~\citep{Qian_2022} TO problems. Theoretical framework and proof of convergence have been established for on-the-fly ROM combined with trust-region methods~\citep{Yano_Huang_Zahr_2021,Wen_Zahr_2023} and line search methods~\citep{Grundvig_Heinkenschloss_2024}.

In the field of fluid-structure interaction (FSI), iterative or fixed point-like strategies are popular to solve the coupling between the solid and fluid equations, leading to large computational cost. Offline ROMs have been used recently in combination with quadratic POD approximations~\citep{Tiba_Dairay_De_Vuyst_Mortazavi_Berro_Ramirez_2024}, but an on-the-fly ROM framework in combination with fixed point and quasi-Newton methods has been used in the literature~\citep{Vierendeels_Lanoye_Degroote_Verdonck_2007,Degroote_Bathe_Vierendeels_2009,Delaissé_Demeester_Fauconnier_Degroote_2022}.

Algebraic solvers have also benefited from advances in reduced-order modelling, for instance linear solvers have used Krylov subspace recycling for a long time~\citep{Soodhalter_de_Sturler_Kilmer_2020}, and more recently combined it with POD-based approaches~\citep{Carlberg_Forstall_Tuminaro_2016,Liu_Wang_Yu_Wang_2021} to augment the Krylov subspace in meaningful ways. \cite{Elman_Su_2020} uses low-rank approximation to accelerate the GMRES method for the solution of Navier-Stokes equation in an all-at-once space-time paradigm. Nonlinear solvers for complex unsteady nonlinear mechanics problems have leveraged ROM-based subspace augmentation as well~\citep{Kerfriden_Gosselet_Adhikari_Bordas_2011}. Other works have studied the acceleration of Picard iterations, for instance by using partial solving~\citep{Senecal_Ji_2017} or relaxation schemes~\citep{Birken_Gleim_Kuhl_Meister_2015}.

In this work, a general framework for on-the-fly adaptive ROM construction for fixed point methods will be presented. The main contributions are:
\begin{itemize}
    \item A generic algorithm and the formalism required to accelerate fixed point iterations using an inexact fixed-point operator;
    \item Convergence results for a fixed-point scheme dependent on the solution of multiple coupled linear systems;
    \item An error estimator based on error propagation to quantify the error induced by a ROM throughout the fixed point iterations.
\end{itemize}

As shown in the previous paragraph, on-the-fly ROMs have been used extensively in the literature. However, even if they are a very practical tool, a formal analysis of their behaviour has yet to be established. To the knowledge of the authors, such an approach at error estimation has not been used in the literature in this context; a simple a posteriori proxy for the error is often used instead \cite{Gogu_2015,Xiao_Lu_Breitkopf_Raghavan_Dutta_Zhang_2020,Xiao_Jun_Lu_Raghavan_Zhang_2022,Qian_2022,Kerfriden_Gosselet_Adhikari_Bordas_2011,Wen_Zahr_2023} to quantify the precision of the ROM (e.g.\ the relative residual), which does not take into account the error propagation through the iterative process and can lead to premature convergence if not treated properly. An example of this will be shown in~\Cref{section:ROMQM}.
% We propose general error estimates that can be computed on-the-fly to measure the real error induced by the ROM in the iterative process.
Moreover, since many iterative processes can be reframed in the language of fixed point iterations (e.g.,  optimization, FSI, autonomous dynamical systems), the global methodology presented could be adapted to a large variety of problems in applied mathematics.

First, general theory about inexact fixed point iterations and reduced-order modelling will be presented (\Cref{section:preliminaries}). Then, the main algorithm and its theoretical guarantees will be given (\Cref{section:methodo}), when applied to the solution of an arbitrary number of coupled linear systems. Multiple convergence criteria will be given for such large coupled nonlinear systems. Finally, the algorithm will be applied to a problem in thermo-fluidics (\Cref{sec:application}), where coupled PDEs are solved in an alternate manner, which can be formulated as a fixed point scheme.

\section{Preliminaries}\label{section:preliminaries}

A quick overview of the theory of fixed point approximations with exact and inexact iterative methods will be presented, followed by an introduction to fundamental concepts of reduced-order modelling using POD.

\subsection{Fixed Point Iterations}

Let $G:\Real^n \to \Real^n$ be a selfmap. The definition of Lipschitz continuity is recalled.
\begin{definition}
    $G$ is $L$-Lipschitz continuous if for any $\mathbf{x},\mathbf{z}\in \Real^n$, there is a constant $L>0$ such that
    \begin{equation*}
        \| G(\mathbf{x}) - G(\mathbf{z})\| \leq L \| \mathbf{x} - \mathbf{z} \|.
    \end{equation*}
\end{definition}
In the case where $L<1$, $G$ is said to be a \textit{contraction}. The well-known theorem from~\cite{Banach_1922} (\S~2, Thm.~6) ensures that a contractive mapping has a unique fixed point and that the sequence generated by the so-called \textit{Picard iterates}, starting from a point $\mathbf{x}^0$,
\begin{align*}
    \mathbf{x}^{k+1} = G(\mathbf{x}^k),
\end{align*}
converges to the fixed point. In the case where $L\geq 1$, a function might still admit a unique fixed point but some other requirement has to be satisfied, for instance, strong pseudocontractivity~\citep{Chidume_1994,Inchan_2009,Jorquera_Alvarez_2023}. Iterative schemes can be designed to approximate fixed points in these situations, like the \textit{Krasnoselskij iterates}
\begin{equation}
\label{krasno}
    \mathbf{x}^{k+1} = G_\lambda(\mathbf{x}^k) = (1-\lambda) \mathbf{x}^k + \lambda G(\mathbf{x}^k)
\end{equation}
or the \textit{Mann iterates}
\begin{equation*}
\label{mann}
    \mathbf{x}^{k+1} = G_{\lambda^k}(\mathbf{x}^k) = (1-\lambda^k) \mathbf{x}^k + \lambda^k G(\mathbf{x}^k),
\end{equation*}
which both converge to the unique fixed point under conditions relating the pseudocontraction constants and the relaxation factor $\lambda$ (resp.\ $\lambda^k$)~\citep{Berinde_2007}. Under the right hypotheses, the averaged operators $G_\lambda$ and $G_{\lambda^k}$ are contractions as well. Those iterative schemes arise especially in the context of operator splitting~\citep{Liang_Fadili_Peyré_2016}. One could also obtain a constant step ($\gamma$) gradient descent scheme from~\eqref{krasno} by taking $G(\mathbf{x}) = \mathbf{x} - \gamma \nabla J (\mathbf{x})$, which indeed satisfies the convergence hypotheses in the case of $J$ strongly convex and $\nabla J$ Lipschitz continuous~\citep{Jung_2017,Wensing_Slotine_2020}. Those techniques are mentioned here because the error propagation analysis performed in the next section could be directly applied to those iterative schemes instead of the Picard scheme, thus making the methodology presented in the following section applicable to a wide range of problems.

In the case of nonlinear problems where the evaluation of $G$ is itself obtained by an iterative process, or in the case where a data-based surrogate model is used for $G$, the evaluation at a point $G(\mathbf{x})$ may be inexact. Let $G_k:\mathbb{R}^n\to\mathbb{R}^n$ be an approximation of $G$ used at the $k$-th iteration of the iterative process. The inexact Picard iterations $\widehat{\mathbf{x}}^k$ are defined as
\begin{align*}
    \widehat{\mathbf{x}}^{k+1} = G_k(\widehat{\mathbf{x}}^k),
\end{align*}
starting from an initial datum $\mathbf{x}^0$.

\begin{hypothesis}
    \label{hyp:controled}
    At the $k$-th iteration of the inexact fixed point algorithm, the approximation error satisfies
    \begin{equation*}
        \| G(\widehat{\mathbf{x}}^k) - G_k(\widehat{\mathbf{x}}^k) \|\leq \delta^k.
    \end{equation*}
    for some $\delta^k > 0$ depending on $\widehat{\mathbf{x}}^k$.
\end{hypothesis}

Early work by Alfeld studies the convergence of inexact fixed point algorithms. The main result from~\cite[\S~2, Thm.~3]{Alfeld_1982} is that a necessary and sufficient condition for the convergence of the inexact fixed point scheme to the fixed point of $G$ is that $\delta^k \to 0$ as $k\to \infty$. \cite[\S~2, Thm.~4]{Birken_2015} specialized this result to alternate solution of systems (like FSI), which will be used later not as a stopping criterion as it was intended, but as a ROM quality indicator (\S\ref{sec:application}). The result is the following.
\begin{theorem}
\label{thm:birken}
    Let $G$, $H$ be $\mathbb{R}^n$-selfmaps with respective Lipschitz constants $L_1$, $L_2$ s.t.\ $L_1L_2<1$. Assuming the sequences $\mathbf{x}^{k+1} = H(G(\mathbf{x}^k))$ and $\widehat{\mathbf{x}}^{k+1} = H_k(G_k(\widehat{\mathbf{x}}^k))$ both converge to $\mathbf{x}^*$ and $\widehat{\mathbf{x}}^*$ respectively, and~\Cref{hyp:controled} is satisfied for sequences $\delta_1^k \leq \delta_1$ and $\delta_2^k \leq \delta_2$. Then
    \begin{align*}
        \| \widehat{\mathbf{x}}^* - \mathbf{x}^* \| \leq \frac{L_2 \delta_1 + \delta_2}{1 - L_1 L_2}.
    \end{align*}
\end{theorem}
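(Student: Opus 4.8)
The plan is to derive a single-step recursive inequality for the distance between the inexact iterate and the exact fixed point, and then pass to the limit. Write $F = H\circ G$, so that $\mathbf{x}^*$ is the fixed point of $F$, i.e. $\mathbf{x}^* = H(G(\mathbf{x}^*))$. Set $a^k = \| \widehat{\mathbf{x}}^k - \mathbf{x}^* \|$; the goal is to bound $a^* = \lim_k a^k = \| \widehat{\mathbf{x}}^* - \mathbf{x}^* \|$, which exists by the assumed convergence of both sequences together with continuity of the norm.

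First I would expand the one-step error $\widehat{\mathbf{x}}^{k+1} - \mathbf{x}^* = H_k(G_k(\widehat{\mathbf{x}}^k)) - H(G(\mathbf{x}^*))$ by inserting the two intermediate quantities $H(G_k(\widehat{\mathbf{x}}^k))$ and $H(G(\widehat{\mathbf{x}}^k))$, then apply the triangle inequality to split it into three pieces. Each piece is controlled separately: the term $\| H_k(G_k(\widehat{\mathbf{x}}^k)) - H(G_k(\widehat{\mathbf{x}}^k)) \|$ is the surrogate error of the outer map evaluated at its actual argument $G_k(\widehat{\mathbf{x}}^k)$, hence bounded by $\delta_2^k \leq \delta_2$ via \Cref{hyp:controled}; the term $\| H(G_k(\widehat{\mathbf{x}}^k)) - H(G(\widehat{\mathbf{x}}^k)) \|$ is bounded by $L_2 \delta_1^k \leq L_2 \delta_1$ using Lipschitz continuity of $H$ followed by the surrogate error of $G$; and the term $\| H(G(\widehat{\mathbf{x}}^k)) - H(G(\mathbf{x}^*)) \|$ is bounded by $L_1 L_2 \| \widehat{\mathbf{x}}^k - \mathbf{x}^* \| = L_1 L_2 \, a^k$ by composing the two Lipschitz estimates. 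Collecting these yields the recursion
\begin{equation*}
    a^{k+1} \leq L_1 L_2 \, a^k + \paren{L_2 \delta_1 + \delta_2}.
\end{equation*}

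Finally I would take the limit $k \to \infty$. Since $a^k \to a^*$ and $a^{k+1} \to a^*$, and the additive constant $L_2 \delta_1 + \delta_2$ is independent of $k$, passing to the limit in the recursion gives $a^* \leq L_1 L_2 \, a^* + \paren{L_2 \delta_1 + \delta_2}$. Because $L_1 L_2 < 1$, the factor $1 - L_1 L_2$ is strictly positive, so rearranging produces the claimed bound $a^* \leq \paren{L_2 \delta_1 + \delta_2}/\paren{1 - L_1 L_2}$.

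The only delicate point is the bookkeeping of the insertion terms: in particular, recognizing that \Cref{hyp:controled} for the outer map must be invoked at the point $G_k(\widehat{\mathbf{x}}^k)$ — the true argument fed to $H_k$ — rather than at $\widehat{\mathbf{x}}^k$, and that the Lipschitz constant of $H$ is precisely what converts the inner surrogate error $\delta_1$ into the contribution $L_2 \delta_1$. The limit step itself is routine given the assumed convergence of both sequences; alternatively one could unroll the recursion into a geometric series in $L_1 L_2$ and sum it, but passing to the limit directly is cleaner.
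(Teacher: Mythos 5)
Your argument is correct: the three-way insertion of $H(G_k(\widehat{\mathbf{x}}^k))$ and $H(G(\widehat{\mathbf{x}}^k))$, the bound $a^{k+1} \leq L_1L_2\,a^k + L_2\delta_1 + \delta_2$, and the passage to the limit using $L_1L_2<1$ give exactly the stated estimate, and you correctly invoke \Cref{hyp:controled} for the outer map at its actual argument $G_k(\widehat{\mathbf{x}}^k)$. Note that the paper itself states this result as a citation to Birken (2015) and gives no proof, but your derivation is the standard one and mirrors the single-step error-splitting the paper does use in \Cref{GrbErrBoundK}.
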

In particular, when $H_k = H$ for all $k$ ($\delta_2=0$),
\begin{align*}
    \| \widehat{\mathbf{x}}^* - \mathbf{x}^* \| \leq\frac{L_2}{1 - L_1 L_2} \delta_1.
\end{align*}
If by construction of $G_k$, it is possible to establish an upper bound on $\delta_1^k$, then it can be ensured at each iteration that
\begin{align}
    \delta_1^k \leq \delta_1 \leq \frac{1 - L_1 L_2}{L_2}\varepsilon,
    \label{eq:asymptote}
\end{align}
for $\varepsilon$ the tolerance of the algorithm. We can hope that asymptotically, the error will satisfy the bound when the stopping criterion is attained (e.g.\ $\|\widehat{\mathbf{x}}^{k+1}-\widehat{\mathbf{x}}^k\|\leq \varepsilon$). In the next section, a strategy to design a non-asymptotic criterion based on error propagation in the inexact sequence will be proposed, based on the analyses of Alfeld and Birken.

Recent literature has studied similar results for inexact Mann-like iterative schemes \citep{Liu_1995,Xu_1998,Liang_Fadili_Peyré_2016,Bravo_Cominetti_Pavez-Signé_2019} and inexact Newton's method~\citep{Argyros_1999}.

\subsection{Projection-based ROMs and Full-field Approximations}
\label{sec:roms}

Consider the problem
\begin{align*}
    \Lambda(\mu) u = f(\mu)
\end{align*}
for $\Lambda$ some linear partial differential operator depending on a parameter $\mu \in \mathcal{P}$, with suitable boundary conditions to ensure existence and uniqueness.

By using standard discretization techniques like the finite element method (FEM)~\citep{Ern_Guermond_2021}, if $\Lambda$ is elliptic, this problem can be approximated by the solution of a finite-dimensional linear system
\begin{align}
\label{eq:discreteprob}
    A(\mu)u_h = F(\mu)
\end{align}
with a symmetric positive definite matrix $A(\mu)$ of dimension $N$.

The solution $u_h$ is assumed to be continuously dependent on data $\mu$ (requirements to satisfy this hypothesis can be found in~\cite{Hinze_Pinnau_Ulbrich_Ulbrich_2009}). Intuitively, this means that solutions associated to parameters sufficiently close to each other will behave similarly. Leaning into this paradigm, a data-driven approach to approximate solutions based on precomputed snapshots (solutions at other values of $\mu$) can be considered.

The goal of projection-based ROMs (PROMs) is to find a linear subspace $\mathcal{V}$ on which to project~\eqref{eq:discreteprob} while minimizing some notion of projection error. This linear subspace should be spanned by a basis of size much smaller than the ambient space to reduce the cost as much as possible. A maximal size for the reduced basis $N_b$ is first chosen, i.e.\ $\mathrm{dim}\mathcal{V}\leq N_b$. A snapshot matrix $S$ of size $N\times N_b$ is built. Every column is the solution $u_h^{(i)}$ to~\eqref{eq:discreteprob} for some parameter value $\mu^{(i)}$ ($i=1,\dots,N_b$). Ideally, the subspace $\mathcal{V}$ should be taken as the one that minimizes the high-dimensional embedding error of the projected snapshots,
\begin{equation*}
    \min_{\substack{ \mathrm{dim}\mathcal{V}=P \\ P\leq N_b}}\| \mathcal{E}_\mathcal{V} ( \Pi_\mathcal{V} (u_h^{(i)})) - u_h^{(i)}\|,
\end{equation*}
where $\Pi_\mathcal{V}:\Real^N \to \mathcal{V}$ is the low-dimensional projection and $\mathcal{E}_\mathcal{V}:\mathcal{V} \to \Real^{N}$ is the high-dimensional embedding.

Two common basis reduction methods are based on the Gram-Schmidt (GS) process and singular value decomposition (SVD). First, the snapshot matrix is centered by subtracting the mean snapshot to each column, $\overline{u} = \frac{1}{N_b} \sum_{i=1}^{N_b} u_h^{(i)}$. This is equivalent to right-multiplying $S$ by the centering matrix $C_{N_b} = I_{N_b} - \frac{1}{N_b}J_{N_b}$. The projection matrix $V$ can be obtained either by the Gram-Schmidt algorithm (orthogonalizing the columns of $S C_{N_b}$) or by singular value decomposition (SVD),
\begin{align*}
    SC_{N_b} = U \Sigma W^\top.
\end{align*}

In the case of simple orthogonalization, the basis is always of size $N_b$. In the case of the SVD, an energy criterion~\citep{Quarteroni_Manzoni_Negri_2016} can be used to determine the fraction of the variance captured by a basis of size $P<N_b$ composed of the $P$ first columns of $U$,
\begin{align*}
    \frac{\sum_{i=1}^P \sigma_i^2}{\sum_{i=1}^{N_b} \sigma_i^2}< 1-\varepsilon_\mathrm{rb}^2,
\end{align*}
where $\sigma_i$ are the diagonal elements of $\Sigma$, or the singular values of $SC_{N_b}$. Those $P$ columns will be grouped in a matrix $V \in \Real^{N\times P}$, and $\Pi_\mathcal{V} = V^\top$ and $\mathcal{E}_\mathcal{V}=V$. To find an approximation to $u_h$ for a new value of $\mu$, the following low-order projected linear system has to be solved
\begin{align}
    \label{eq:lowdimsys}
    V^\top A(\mu) V\alpha = V^\top F(\mu) - V^\top K(\mu) \overline{u},
\end{align}
where $\alpha \in \Real^{P}$, and the POD approximation to $u_h$ is given by $\widehat{u} = V \alpha + \overline{u}$.

The error in Euclidean norm is upper bounded by
\begin{align}
\label{eq:errgeneralROM}
    \norm{u_h - \widehat{u}} \leq \norm{A(\mu)^{-1}}\norm{r(\mu,\widehat{u})},
\end{align}
where $r(\mu,\widehat{u}) = F(\mu) - A(\mu)\widehat{u}$ is the residual.

\section{Methodology}\label{section:methodo}

First, an adaptive error estimator for inexact fixed point iterations will be derived. This estimator can be computed on-the-fly. Then, the setting of coupled linear systems solved through Picard iterations will be presented, and the quantities needed to compute the aforementioned error estimator will be derived in this context.

\subsection{Error Propagation}
\label{section:errorprop}

The main idea in this work will be to ensure that the distance between a fixed point sequence and an inexact fixed point sequence using a ROM approximation stays under a desired tolerance. Incidentally, picking this tolerance to be at most equal to the tolerance $\varepsilon$ of the fixed point algorithm (in the following, this tolerance will be set to $\varepsilon$ as well) will yield a solution very close to the one that would be obtained by using only high-fidelity data. First, results from~\cite{Alfeld_1982} and~\cite{Birken_2015} are adapted to measure the distance between the exact and inexact sequence, starting from a common point, after an arbitrary number of iterations.

\begin{lemma}\label{GrbErrBoundK}
Assuming $G$ is $L$-Lipschitz continuous, starting from a common point $\mathbf{x}^j=\widehat{\mathbf{x}}^j$, the distance between the $k+1$-th inexact iterate and the $k+1$-th exact iterate ($k\geq j$) is upper bounded by
\begin{equation}
        \| \widehat{\mathbf{x}}^{k+1} - \mathbf{x}^{k+1}\| \leq  \eta^{k+1} := \sum_{i=0}^{k-j} L^i \delta^{k-i}.
        \label{eq:etak}
\end{equation}
\end{lemma}

\begin{proof} The error is split into two parts,
\begin{equation*}
    \| \widehat{\mathbf{x}}^{k+1} - \mathbf{x}^{k+1}\|  \leq \| G(\widehat{\mathbf{x}}^{k}) - G(\mathbf{x}^{k}) \| + \| G_{k}(\widehat{\mathbf{x}}^{k}) - G(\widehat{\mathbf{x}}^{k}) \|.
\end{equation*}
The first term is related to the sensitivity of $G$ w.r.t.\ the error on the previous point. By Lipschitz continuity,
\begin{equation*}
    \| G(\widehat{\mathbf{x}}^{k}) - G(\mathbf{x}^{k}) \| \leq L \| \widehat{\mathbf{x}}^{k} - \mathbf{x}^{k} \|.
\end{equation*}
The second term can be bounded by~\Cref{hyp:controled}:
\begin{equation*}
    \| G_{k}(\widehat{\mathbf{x}}^{k}) - G(\widehat{\mathbf{x}}^{k}) \| \leq \delta^{k}.
\end{equation*}
The error bound depends on the error at the previous step:
\begin{equation*}
    \| \widehat{\mathbf{x}}^{k+1} - \mathbf{x}^{k+1}\| \leq L \| \widehat{\mathbf{x}}^{k} - \mathbf{x}^{k} \| + \delta^{k}.
\end{equation*}
By applying the splitting above recursively $k-j$ times, until reaching $\| \widehat{\mathbf{x}}^{j} - \mathbf{x}^{j} \|=0$, the desired upper bound is obtained.
\end{proof}

Depending on the choice of inexact fixed point operator $G_k$ (its nature left intentionally ambiguous for now), the quantity $\delta^k$ could be evaluated numerically along the iterative process and the approximate propagated error $\eta^k$ could be quantified explicitly. The Lipschitz constant $L$ can be easily approximated numerically by considering the ratio of distances between successive iterations,
\begin{equation*}
    L \approx
    \frac{\|\mathbf{x}^{k+1}-\mathbf{x}^k\|}{\|\mathbf{x}^{k}-\mathbf{x}^{k-1}\|}.
\end{equation*}

The next step is not only to measure the propagated error, but to control it so that it remains below the prescribed tolerance at all times. If the mapping $G$ is non-expansive, it can be easily shown that using the exact fixed point operator instead of the inexact fixed point operator will ensure the error between the next iterates of the inexact and exact sequences is below the threshold.

\begin{lemma}
\label{GrbCtrlErr}
Assuming $G$ is nonexpansive ($L\leq 1$), if  $\| \widehat{\mathbf{x}}^k-\mathbf{x}^k\| \leq \varepsilon$, then
\begin{align*}
        \| G(\widehat{\mathbf{x}}^k) - \mathbf{x}^{k+1}\| \leq \varepsilon.
\end{align*}
\end{lemma}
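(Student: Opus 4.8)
The plan is to observe that the claim collapses to a single application of nonexpansiveness once the exact iterate is written in terms of $G$. Recall that the exact Picard sequence satisfies $\mathbf{x}^{k+1} = G(\mathbf{x}^k)$ by definition, so the quantity to be bounded is in fact $\|G(\widehat{\mathbf{x}}^k) - G(\mathbf{x}^k)\|$, i.e. the image under the \emph{same} map $G$ of the two points $\widehat{\mathbf{x}}^k$ and $\mathbf{x}^k$. This is the crucial reframing: no inexact operator $G_k$ appears here, and hence the approximation error $\delta^k$ from \Cref{hyp:controled} never enters. The lemma is measuring what happens if, at a single step, one applies the exact operator $G$ to the inexact iterate in place of $G_k$.

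First I would rewrite the left-hand side as $\|G(\widehat{\mathbf{x}}^k) - G(\mathbf{x}^k)\|$ using the definition of the exact Picard iteration. Then I would apply the Lipschitz bound with constant $L$, yielding $\|G(\widehat{\mathbf{x}}^k) - G(\mathbf{x}^k)\| \leq L \|\widehat{\mathbf{x}}^k - \mathbf{x}^k\|$. Finally I would invoke nonexpansiveness ($L \leq 1$) together with the hypothesis $\|\widehat{\mathbf{x}}^k - \mathbf{x}^k\| \leq \varepsilon$ to conclude $\|G(\widehat{\mathbf{x}}^k) - \mathbf{x}^{k+1}\| \leq L\varepsilon \leq \varepsilon$.

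There is essentially no technical obstacle in the derivation — it is a direct consequence of the Lipschitz inequality with $L\leq 1$. The only point requiring care is conceptual rather than computational: recognizing that $\mathbf{x}^{k+1}$ equals $G(\mathbf{x}^k)$ (and not $G(\widehat{\mathbf{x}}^k)$), so that the difference reduces to two evaluations of the single operator $G$. This is precisely the mechanism the algorithm will exploit: whenever the propagated error estimate of \Cref{GrbErrBoundK} threatens to exceed $\varepsilon$, performing one exact (high-fidelity) evaluation of $G$ guarantees that the distance to the exact sequence does not grow past the tolerance.
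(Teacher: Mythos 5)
Your proof is correct and follows exactly the same route as the paper's: rewrite $\mathbf{x}^{k+1}$ as $G(\mathbf{x}^k)$, apply the Lipschitz bound, and conclude with $L\leq 1$. Nothing further is needed.
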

\begin{proof}
\begin{equation*}
    \| G(\widehat{\mathbf{x}}^k) - \mathbf{x}^{k+1}\| =  \| G(\widehat{\mathbf{x}}^k) - G(\mathbf{x}^{k})\| \leq L \| \widehat{\mathbf{x}}^k-\mathbf{x}^k\| \leq  L \varepsilon,
\end{equation*}
and $L\leq 1$.
\end{proof}

With these two properties in hand, an algorithm can be designed to ensure that the error will stay below the threshold. First, it will be assumed that a certain number of iterations (samples), $N_b$, have to be collected first to build the inexact fixed point operators. From there, the algorithm can be outlined:

\begin{enumerate}
    \item Until the maximal sample size $N_b$ is attained or the inexact operator error $\delta^k$ is under the tolerance, the exact fixed point is used.
    \item As long as the approximate propagation error $\eta^k$ is under the tolerance, the inexact fixed point is used.
    \item When $\eta^k$ gets above the threshold, the current step is rejected, and the previous (valid) iterate is used with the exact fixed point operator. The propagation error is set to $L$ times the previous error. The inexact fixed point operator is updated using the high-fidelity data from this data point.
    \item Steps 2 and 3 are repeated until convergence.
\end{enumerate}
A last validation step can be executed at the end of the algorithm to ensure that the convergence is true as well in terms of the exact fixed point. If it is not the case, a looping strategy can be applied to reach true convergence. The pseudocode of the algorithm is available at~\Cref{algo:acceleratedscheme}. From~\Cref{GrbErrBoundK} and~\Cref{GrbCtrlErr}, a theoretical guarantee can be obtained with regards to the sequence generated by the algorithm.

\begin{algorithm}[hpb]
\caption{On-the-fly Accelerated Inexact Picard Iterations}\label{algo:acceleratedscheme}%
\begin{algorithmic}[1]
\State $k=0$
\State $\eta^k=\infty$
\State $recompute=False$
\State $converged=False$
\While{$k<k_{max}$ \textbf{and} \textbf{not} $converged$}

    \State $step=\infty$
    \If{$\eta^k>\varepsilon$ \textbf{or} $k<N_b$ \textbf{or} $recompute$}

        \State $\mathbf{x}^{k+1} = G(\mathbf{x}^k)$
        \State Construct $G_k$, $\delta^k$ with $\mathbf{x}^{k+1}$
        \If{$recompute$}
            \State $\eta^{k+1} = L\times \eta^{k}$ \Comment{Error after refinement}
            \State $recompute = False$
        \Else
            \State $\eta^{k+1} = \delta^{k}$\;
        \EndIf

    \Else
        \State $G_k = G_{k-1}$
        \State $\mathbf{x}_t = G_k(\mathbf{x}^k)$
        \State $newerr = \delta^k + L\times \eta^k$
        \If{$newerr>\varepsilon$} \Comment{Reject inexact step}
            \State $\mathbf{x}^{k+1} = \mathbf{x}^k$
            \State $recompute=True$

        \Else \Comment{Accept inexact step}
            \State $\mathbf{x}^{k+1} = \mathbf{x}_t$
            \State $\eta^{k+1} = newerr$

        \EndIf
    \EndIf
    \State $step = \norm{\mathbf{x}^{k+1}-\mathbf{x}^k}$\;
    \If{$step < \varepsilon$ \textbf{and} \textbf{not} $recompute$}
        \If{$\norm{G(\mathbf{x}^{k+1})-\mathbf{x}^{k+1}}< \varepsilon$} \Comment{Validate convergence}
            \State $converged = True$
        \Else
            \State $recompute = True$
        \EndIf
    \EndIf
    \State $k\leftarrow k+1$
 \EndWhile
\end{algorithmic}
\end{algorithm}

\begin{corollary}
    Assuming $G$ is a contraction, the sequence generated by~\Cref{algo:acceleratedscheme} will stay at a distance at most $\varepsilon$ from the sequence generated by the exact fixed point scheme.
    \label{cor:main}
\end{corollary}

It is important to notice that the algorithm guarantees precision, but not acceleration. If the costs of computing and evaluating $G_k$ and $\delta^k$ are low enough with respect to computing $G$, the algorithm will give a speedup. A final hypothesis is that it is possible to construct a $G_k$ such that $\delta^k$ will be below the given tolerance $\varepsilon$. In the case where that is not possible, one should still expect the algorithm to work because $G$ will always be used instead of $G_k$, but at a higher computational cost since $\delta^k$ will be unnecessarily computed at every iteration.

\begin{remark}
\label{remark:certified_vs_practical}
Two regimes of \Cref{algo:acceleratedscheme} can be distinguished.
In the \textit{certified regime}, $\delta^k$ satisfying 
\Cref{hyp:controled} is replaced by a provably valid upper bound (e.g., derived from coercivity constants and Poincaré 
inequalities) and the 
guarantee of \Cref{cor:main} holds strictly. In the 
\textit{practical regime}, the constants are estimated numerically 
from the iterates (see \Cref{section:alternatesolution}); 
the bound of \Cref{cor:main} then holds approximately. The quality 
of this approximation is measured through the effectivity of 
$\eta^k$ with respect to the true error, 
which will be studied in \Cref{section:ROMQM}.
\end{remark}

% \subsection{Continuation on the tolerance}

% As mentioned previously, it is not always possible to construct an approximate operator $G_k$ such that the error will be under the tolerance $\varepsilon$ (this is problem dependant). The algorithm above can be relaxed to accept ROM steps with an error greater than $\varepsilon$. First, set another tolerance $\varepsilon_\mathrm{ROM}$, larger than $\varepsilon$, that will be used to determine if $\eta^k$ is small enough and accept the ROM step $\widehat{\mathbf{x}}^k$. This will allow to use the ROM more frequently, even if the error is above the required final tolerance. However, if the fixed point stepsize $\Delta \mathbf{x}^k$ is smaller than $\eta^k$, the step is not meaningful, in the sense that it is similar to or smaller than the error in the ROM. In that case, the auxiliary tolerance should be decreased by some rule, e.g.,
% \begin{align*}
%     \varepsilon_\mathrm{ROM} \leftarrow \max(\gamma\varepsilon_\mathrm{ROM}, \varepsilon),
% \end{align*}
% for some user-defined $\gamma\in (0,1)$. This continuation will progressively tighten the tolerance until it reaches the high-fidelity tolerance. Adaptive tolerance is also present in the work \cite{Xiao_Jun_Lu_Raghavan_Zhang_2022} in an optimization context.

\subsection{ROM-based Inexact Operator}

The algorithm presented in the previous subsection is very general. However, the explicit construction of the inexact operators $G_k$ must be detailed. Most of the time, the costly part of the evaluation of the operator $G$ will be related to the solution of some PDE. Depending on what is the problem at hand, $G_k$ might be obtained in multiple ways (e.g.\ a shallow or physics-informed neural network, a response surface, or any other surrogate requiring few training data and with fast training time). In this case a ROM using POD is considered, and the previous iterations will be used as snapshots, as in~\cite{Vierendeels_Lanoye_Degroote_Verdonck_2007,Xiao_Lu_Breitkopf_Raghavan_Dutta_Zhang_2020,Xiao_Jun_Lu_Raghavan_Zhang_2022}. The approximate solution operators are updated when the error in~\Cref{algo:acceleratedscheme} gets above the tolerance. The new snapshot is added to the snapshot matrix in a ``first in, first out'' (FIFO) pattern to keep $N_b$ snapshots in total. This can be seen as a non-parametric approach to the ROM methodology, because there is no parameter $\mu$ changing along the iterations, but since the fixed point iterations are convergent, the continuity argument mentioned in~\Cref{sec:roms} still holds.

\subsubsection{Picard Iterations Depending on the Solutions of Multiple Linear Systems}

The case where the fixed point operator $G$ depends on a variable $\mathbf{x}\in\Real^n$ but also on the solution of multiple linear systems, themselves dependent on $\mathbf{x}$, is studied. Let $A_i \in \mathrm{GL}_{n_i}(\Real)$, $F_i \in \Real^{n_i}$ ($i=1,\dots,p$) be the matrices and right-hand sides respectively of $p$ linear systems. It is assumed that those systems might have dependences on the solution of other linear systems $\mathbf{u}_i \in \Real^{n_i}$. In this context, it is assumed that there exists an order in which all the systems can be solved sequentially; otherwise, a nested nonlinear solver would be required, which will not be accounted for in the following analysis. The most general dependence scheme that can be considered is obtained by ordering the systems by the order in which they are solved, and making each system dependent on the solutions to every previous system, i.e.
\begin{align*}
    \mathbf{u}_i &:= A_i^{-1}F_i,\quad i=1,\dots,p,\\
    A_i &= A_i(\mathbf{x}, \mathbf{u}_{1},\dots,\mathbf{u}_{i-1}),\\
    F_i &= F_i(\mathbf{x}, \mathbf{u}_{1},\dots,\mathbf{u}_{i-1}).
\end{align*}
Since the couplings induce nonlinearity in this system of equations, the system needs to be solved in an iterative fashion. We will denote $\mu_i^{k+1} = (\mathbf{x}^k, \mathbf{u}^{k+1}_{1},\dots,\mathbf{u}^{k+1}_{i-1})$, the information passed from the previous iteration, $\mathbf{x}^k$, and the values $\mathbf{u}^{k+1}_{j}$ computed at the current iteration up to the $i$-th equation.

The dependences of $\mathbf{u}_i$ will not be written in the following to simplify the expressions, but are assumed to follow the rule established above, i.e.\ $\mathbf{u}_i^{k+1} = \mathbf{u}_i^{k+1}(\mu_i^{k+1})$. To represent that the computation of a single fixed-point iteration is dependent on the solution of those systems, a function
\begin{align*}
    \varphi : \Real^n \times \Real^{n_1} \times \dots \times \Real^{n_p} \to \Real^n
\end{align*}
is introduced, and the fixed point operator is defined as
\begin{equation*}
    G(\mathbf{x}) = \varphi(\mathbf{x}, \mathbf{u}_1,\dots,\mathbf{u}_p).
\end{equation*}
The function $\varphi$ can be seen as a multipoint iterative function without memory~\citep{Petković_Neta_Petković_Džunić_2014} with more complex dependence between the auxiliary quantities. Alternatively, this is also equivalent to a block Gauss-Seidel method~\citep{Saad_2003}. \cite{Whiteley_Gillow_Tavener_Walter_2011} have studied the error induced by neglecting couplings in a Gauss-Seidel scheme; in this work, the error made when using a ROM for an intermediate system is considered.

A few examples are frequently encountered when $p=1$. For instance, consider a nonlinear equation of the form
\begin{align*}
    A_1(\mathbf{x}) \mathbf{x} = F_1(\mathbf{x}),
\end{align*}
which could be solved by the following fixed point scheme
\begin{align*}
    A_1(\mathbf{x}^k) \mathbf{x}^{k+1} = F_1(\mathbf{x}^k).
\end{align*}
The corresponding Picard iterates are characterized by the function
\begin{align*}
    \varphi(\mathbf{x},\mathbf{u}_1) = \mathbf{u}_1.
\end{align*}
Another example can be obtained when thinking about constrained optimization,
\begin{align*}
    \min_{\mathbf{x}\in U} J(\mathbf{x},\mathbf{u}_1) \qq{s.t.} A_1(\mathbf{x})\mathbf{u}_1=F_1(\mathbf{x}).
\end{align*}
The fixed point scheme can be obtained by the projected gradient descent method, and the corresponding function is
\begin{align*}
\varphi(\mathbf{x},\mathbf{u}_1) = \Pi_U(\mathbf{x}-\gamma\nabla J(\mathbf{x},\mathbf{u}_1)),
\end{align*}
where $\Pi_U$ is the orthogonal projection onto $U$ and $\gamma$ is the step size. An example with $p=2$ will be presented in~\Cref{sec:application}. A sufficient condition for the convergence of such an iterative process is given.

\begin{theorem}
\label{thm:convergence}
    Assuming $\varphi$ is Lipschitz continuous in each of its arguments with Lipschitz constants $L_i$ ($i=0,\dots,p$), and that the solution maps $(\mathbf{x},\mathbf{u}_1,\dots,\mathbf{u}_{i-1})\mapsto \mathbf{u}_i$ are Lipschitz continuous with respect to each of their arguments, with constants $K_{i,j}$ ($i=1,\dots,p$, $j=0,\dots,i-1$). Assume
    \begin{align*}
       L_0 + \sum_{j=1}^p L_j \sum_{\sigma\in d_{0,j}} \prod_{m=1}^{\abs{\sigma}-1}K_{\sigma_m,\sigma_{m+1}} < 1,
    \end{align*}
    where $d_{i,j}$ is the set of strictly decreasing integer sequences starting at $j$ and ending at $i$. Then, $G$ is a contraction.
\end{theorem}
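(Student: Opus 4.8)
The plan is to reduce everything to the total Lipschitz constant of each solution map $\mathbf{y}_i$, viewed as a function of $\mathbf{x}$ alone through the whole dependence chain, and then express the Lipschitz constant of $G$ in terms of these. First I would fix two points $\mathbf{x},\mathbf{x}'$ and write $G(\mathbf{x})=\varphi(\mathbf{x},\mathbf{y}_1,\dots,\mathbf{y}_p)$ and $G(\mathbf{x}')=\varphi(\mathbf{x}',\mathbf{y}_1',\dots,\mathbf{y}_p')$, where $\mathbf{y}_i$ and $\mathbf{y}_i'$ denote the chains evaluated at $\mathbf{x}$ and $\mathbf{x}'$. Changing one argument at a time (a telescoping sum, valid because $\varphi$ is Lipschitz in each argument uniformly in the others) and applying the triangle inequality gives
\[
\| G(\mathbf{x}) - G(\mathbf{x}') \| \leq L_0 \|\mathbf{x}-\mathbf{x}'\| + \sum_{j=1}^p L_j \|\mathbf{y}_j - \mathbf{y}_j'\|.
\]
So it suffices to produce constants $\Lambda_j$ with $\|\mathbf{y}_j-\mathbf{y}_j'\|\leq \Lambda_j\|\mathbf{x}-\mathbf{x}'\|$; the Lipschitz constant of $G$ is then at most $L_0 + \sum_{j=1}^p L_j \Lambda_j$.

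The next step is to set up a recursion for $\Lambda_j$. Applying the same telescoping argument to the solution map $\mathbf{y}_j = \mathbf{y}_j(\mathbf{x},\mathbf{y}_1,\dots,\mathbf{y}_{j-1})$, using its per-argument Lipschitz constants $K_{j,\ell}$, yields
\[
\|\mathbf{y}_j - \mathbf{y}_j'\| \leq K_{j,0}\|\mathbf{x}-\mathbf{x}'\| + \sum_{\ell=1}^{j-1} K_{j,\ell}\,\|\mathbf{y}_\ell - \mathbf{y}_\ell'\|.
\]
Defining $\Lambda_1 := K_{1,0}$ and $\Lambda_j := K_{j,0} + \sum_{\ell=1}^{j-1} K_{j,\ell}\Lambda_\ell$, a straightforward induction on $j$ (the lower-index chains $\mathbf{y}_\ell$ already satisfy $\|\mathbf{y}_\ell-\mathbf{y}_\ell'\|\leq\Lambda_\ell\|\mathbf{x}-\mathbf{x}'\|$) confirms $\|\mathbf{y}_j-\mathbf{y}_j'\|\leq\Lambda_j\|\mathbf{x}-\mathbf{x}'\|$. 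This is essentially the chain rule made quantitative through composition of Lipschitz maps.

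Finally I would solve the recursion in closed form, which is where the combinatorial statement of the theorem appears. The claim is $\Lambda_j = \sum_{\sigma\in d_{0,j}} \prod_{m=1}^{\abs{\sigma}-1} K_{\sigma_m,\sigma_{m+1}}$, proved by induction on $j$: a strictly decreasing sequence $\sigma\in d_{0,j}$ either steps directly from $j$ to $0$ (contributing $K_{j,0}$) or has second entry $\ell$ with $1\leq\ell\leq j-1$, in which case deleting the leading $j$ leaves an element of $d_{0,\ell}$ and factors out $K_{j,\ell}$. Partitioning $d_{0,j}$ by this second entry reproduces precisely the recursion $\Lambda_j = K_{j,0}+\sum_{\ell=1}^{j-1}K_{j,\ell}\Lambda_\ell$, closing the induction. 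Substituting back, the Lipschitz constant of $G$ equals $L_0 + \sum_{j=1}^p L_j \sum_{\sigma\in d_{0,j}} \prod_{m=1}^{\abs{\sigma}-1} K_{\sigma_m,\sigma_{m+1}}$, which is below $1$ by hypothesis, so $G$ is a contraction.

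I expect the main obstacle to be the bookkeeping in this last step: correctly matching the paths through the dependence graph with the strictly decreasing integer sequences and verifying that the path-product expansion of the recursion equals the stated sum. The analytic estimates (telescoping together with the triangle inequality) are routine once the per-argument Lipschitz constants are recognized as the weights on a directed acyclic dependence graph whose source is $\mathbf{x}$.
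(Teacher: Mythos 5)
Your proposal is correct and follows essentially the same route as the paper's proof: the per-argument Lipschitz decomposition of $\varphi$, the recursive bound $\|\mathbf{y}_j-\mathbf{y}_j'\|\leq K_{j,0}\|\mathbf{x}-\mathbf{x}'\|+\sum_{\ell=1}^{j-1}K_{j,\ell}\|\mathbf{y}_\ell-\mathbf{y}_\ell'\|$, and the strong induction identifying the unrolled recursion with sums of products over strictly decreasing sequences in $d_{0,j}$. The only cosmetic difference is that you split the induction into two stages (first the recursion-defined constants $\Lambda_j$, then their combinatorial closed form by partitioning $d_{0,j}$ on the second entry), whereas the paper carries out both in a single inductive step.
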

\begin{proof}
    An upper estimate to the Lipschitz constant of $G$ is computed. For an arbitrary pair of points $\mathbf{x},\mathbf{z}$, the triangle inequality gives
    \begin{align}
    \label{eq:lipphi}\abs{G(\mathbf{x})-G(\mathbf{z})} = \abs{\varphi(\mathbf{x},\mathbf{u}_1,\dots,\mathbf{u}_p) - \varphi(\mathbf{z},\mathbf{z}_1,\dots,\mathbf{z}_p)}\leq L_0 \|\mathbf{x} - \mathbf{z} \| +\sum_{j=1}^p L_j \| \mathbf{u}_j - \mathbf{z}_j\|
    \end{align}
    where $\mathbf{z}_i = \mathbf{u}_i(\mathbf{z},\mathbf{z}_1,\dots,\mathbf{z}_{i-1})$
    and,
    \begin{align}
        \label{eq:borneyj}
        \norm{\mathbf{u}_j-\mathbf{z}_j} \leq K_{j,0}\| \mathbf{x} - \mathbf{z}\| + \sum_{\ell = 1}^{j-1} K_{j,\ell} \norm{\mathbf{u}_\ell-\mathbf{z}_\ell}.
    \end{align}
    The statement of the theorem will be proven using strong induction. Assume that for $\ell=1,\dots,j-1$,
    \begin{align}
    \label{eq:RecurseHyp}
        \norm{\mathbf{u}_\ell-\mathbf{z}_\ell} \leq \sum_{\sigma\in d_{0,\ell}}\prod_{m=1}^{\abs{\sigma}-1}K_{\sigma_m,\sigma_{m+1}}\norm{\mathbf{x}-\mathbf{z}}.
    \end{align}
    Since all elements of $d_{0,\ell}$ start with $\sigma_1 = \ell$, when multiplying~\eqref{eq:RecurseHyp} by $K_{j,\ell}$, the product can be written as
    \begin{align*}
        \prod_{m=1}^{\abs{\zeta}-1}K_{\zeta_m,\zeta_{m+1}}\norm{\mathbf{x}-\mathbf{z}}
    \end{align*}
    for some $\zeta\in d_{0,j}$ with $\zeta_1=j$ and $\zeta_2=\ell$. By summing over $d_{0,\ell}$ as in~\eqref{eq:RecurseHyp}, all strictly decreasing integer sequences of the form $(j,\ell,\dots,0)$ are considered. Then, by summing over $\ell$ as in~\eqref{eq:borneyj}, all sequences of the form $(j,j-1,\dots,0)$, $(j,j-2,\dots,0)$ down to $(j,1,0)$ are considered. When adding the first term in~\eqref{eq:borneyj}, they are precisely all the sequences of $d_{0,j}$, meaning
    \begin{align}
    \label{eq:finalInduction}
        \norm{\mathbf{u}_j-\mathbf{z}_j} \leq \sum_{\sigma \in d_{0,j}} \prod_{m=1}^{\abs{\sigma}-1} K_{\sigma_m,\sigma_{m+1}}\norm{\mathbf{x}-\mathbf{z}}.
    \end{align}
    The base case is trivially satisfied, because if $j=1$,~\eqref{eq:borneyj} gives
    \begin{align*}
        \norm{\mathbf{u}_{1}-\mathbf{z}_{1}} \leq K_{1,0}\norm{\mathbf{x}-\mathbf{z}},
    \end{align*}
    and $(1,0)$ is the only strictly decreasing integer sequence starting at $1$ and ending at $0$. The case $j=2$ gives
    \begin{align*}
        \norm{\mathbf{u}_{2}-\mathbf{z}_{2}} &\leq K_{2,0}\norm{\mathbf{x}-\mathbf{z}} + K_{2,1}\norm{\mathbf{u}_{1}-\mathbf{z}_{1}}\\
        &\leq (K_{2,0}+K_{2,1}K_{1,0})\norm{\mathbf{x}-\mathbf{z}}
    \end{align*}
    and $(2,0)$ and $(2,1,0)$ are all the desired sequences.

     Combining~\eqref{eq:finalInduction} and~\eqref{eq:lipphi} gives an upper bound on the Lipschitz constant of $G$,
     \begin{align*}
          L \leq L_0 + \sum_{j=1}^p L_j \sum_{\sigma\in d_{0,j}} \prod_{m=1}^{\abs{\sigma}-1}K_{\sigma_m,\sigma_{m+1}}.
     \end{align*}
     Requiring this to be strictly less than $1$ makes $G$ a contraction.

\end{proof}

\begin{remark}
\label{remark:seq}
     $|\sigma|$ is always upper bounded by $j-i+1$, and $d_{i,j}$ has cardinality $2^{j-i-1}$ for $j>i$ ($|d_{i,i}|=1$), since it follows the sequence $a_{j+1} = \sum_{k=0}^j a_k$ with $a_0=1$, (A166444,~\cite{oeis}). This means the computation of the constant requires a number of operations that is growing exponentially with $p$.
\end{remark}

    The constants $K_{i,j}$ can be seen as defining a dependence structure matrix $(\mathcal{K})_{ij}=K_{i,j}$ between the equations; a nonzero value indicates a coupling, and a zero value indicates no direct coupling. If $\mathcal{K}$ has strictly positive elements on its subdiagonal and zeros elsewhere ($K_{i,j} > 0 \iff j=i-1$), then the problem is said to be \textit{linearly structured}; any equation depends only on the solution of the previous equation ($\mathbf{u}_i=\mathbf{u}_i(\mathbf{x},\mathbf{u}_{i-1}(\mathbf{x}))$). More generally, the matrix $\mathcal{K}$ is the adjacency matrix of a weighted directed acyclic graph (DAG). This reveals that the order in which the equations are solved is a topological sorting of the dependence graph, which exists if and only if the dependence can be modelled by a DAG. For a linearly structured problem, the graph is a simple path.

    In the case where the problem is the Picard method to solve a system of multiple coupled equations, the function $\varphi$ is
\begin{align*}
    \varphi(\mathbf{x},\mathbf{u}_1,\dots,\mathbf{u}_p) = (\mathbf{u}_1,\dots,\mathbf{u}_p),
\end{align*}
meaning $L_0=0$, $L_i=1$ ($i> 0$). $\varphi$ will be called a \textit{Picard solver}. The case where the first equation depends only on the last equation
\begin{align*}
    \varphi(\mathbf{x},\mathbf{u}_1,\dots,\mathbf{u}_p) = \mathbf{u}_p
\end{align*}
then $L_p = 1$, $L_i=0$ ($0\leq i<p)$. $\varphi$ will be called a \textit{weak Picard solver}.

An upper bound on the Lipschitz constant sufficient for the operator $G$ to be a contraction can be obtained.

\begin{corollary}
    Suppose that the Lipschitz constants satisfy $K_{i,j}\leq\kappa$ and $L_i\leq\lambda$. If any of the following conditions holds:
    \begin{enumerate}
        \item $\lambda(\kappa+1)^p < 1$,
        \item $\varphi$ is a Picard solver and $\kappa < 2^{\frac{1}{p}}-1$,
        \item $\mathcal{K}$ is linearly structured and
        $\frac{\kappa^{p+1}-\kappa}{\kappa-1}<\frac{1-\lambda}{\lambda}$,
        \item $\varphi$ is a Picard solver, $\mathcal{K}$ is linearly structured, $\kappa < 1$ and $\kappa^{p+1}-2\kappa + 1 > 0$ (in particular, $\kappa<\frac{1}{2}$ satisfies the constraints),
        \item $\varphi$ is a weak Picard solver, $\mathcal{K}$ is linearly structured and
        \begin{align*}
            \prod_{i=1}^{p} K_{i,i-1} < 1,
        \end{align*}
    \end{enumerate}
    then $G$ is a contraction.
\end{corollary}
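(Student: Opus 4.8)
The plan is to specialize the contraction criterion of \Cref{thm:convergence} under the uniform bounds $K_{i,j}\le\kappa$, $L_i\le\lambda$, and, in the linearly structured cases, to exploit the sparsity of $\mathcal{K}$. The engine of all five arguments is a single closed-form estimate for the inner sum appearing in the criterion,
\begin{align*}
S_j := \sum_{\sigma\in d_{0,j}}\prod_{m=1}^{\abs{\sigma}-1}K_{\sigma_m,\sigma_{m+1}}.
\end{align*}
First I would bound $S_j$ in the generic case. Since each factor is at most $\kappa$, a sequence with $\ell$ factors contributes at most $\kappa^{\ell}$. I would then classify the sequences of $d_{0,j}$ by length: a sequence with $\ell$ factors has $\ell-1$ intermediate entries, chosen in strictly decreasing order from $\{1,\dots,j-1\}$, so there are exactly $\binom{j-1}{\ell-1}$ of them. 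The binomial theorem yields
\begin{align*}
S_j \le \sum_{\ell=1}^{j}\binom{j-1}{\ell-1}\kappa^{\ell} = \kappa\,(1+\kappa)^{j-1},
\end{align*}
which is consistent with the count $\abs{d_{0,j}}=2^{j-1}$ of \Cref{remark:seq}, recovered at $\kappa=1$.

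For Case 1, combining $L_0\le\lambda$, $L_j\le\lambda$ and the bound on $S_j$, the left-hand side of the criterion is at most $\lambda+\lambda\sum_{j=1}^{p}\kappa(1+\kappa)^{j-1}$, whose geometric sum telescopes to $\lambda(1+\kappa)^p$; requiring this to be $<1$ is exactly Case 1. Case 2 is the same computation with the Picard-solver weights $L_0=0$, $L_j=1$: the bound becomes $\sum_{j=1}^p\kappa(1+\kappa)^{j-1}=(1+\kappa)^p-1$, and demanding $<1$ gives $(1+\kappa)^p<2$, i.e. $\kappa<2^{1/p}-1$.

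The linearly structured cases use that $K_{i,j}>0$ only when $j=i-1$, so a product over $\sigma\in d_{0,j}$ is nonzero solely for the maximal descending sequence $(j,j-1,\dots,1,0)$; every other sequence carries a vanishing factor. Hence $S_j=\prod_{i=1}^{j}K_{i,i-1}\le\kappa^{j}$. With general weights this turns the criterion into $\lambda\sum_{j=0}^{p}\kappa^{j}=\lambda\paren{1+\tfrac{\kappa^{p+1}-\kappa}{\kappa-1}}<1$, which rearranges to Case 3. Case 4 specializes to the Picard solver, where the criterion reads $\sum_{j=1}^{p}\kappa^{j}<1$; for $\kappa<1$ one multiplies through by $(\kappa-1)<0$, flipping it to $\kappa^{p+1}-2\kappa+1>0$, and $\kappa<\tfrac12$ suffices since then $1-2\kappa>0$ and $\kappa^{p+1}>0$. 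Case 5 uses the weak-Picard weights $L_p=1$, $L_i=0$ ($i<p$): only the $j=p$ term survives and, by linear structure, equals $\prod_{i=1}^{p}K_{i,i-1}$, so the criterion collapses directly to $\prod_{i=1}^{p}K_{i,i-1}<1$.

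The main obstacle is the combinatorial estimate for $S_j$: correctly enumerating the sequences of $d_{0,j}$ by their number of factors and recognizing the resulting sum as $\kappa(1+\kappa)^{j-1}$ through the binomial theorem. Once this identity and the sparsity observation for linearly structured $\mathcal{K}$ are established, each of the five conditions reduces to an elementary geometric-series manipulation.
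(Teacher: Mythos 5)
Your proposal is correct and follows essentially the same route as the paper's proof: the same binomial count of descending sequences by length giving $S_j\le\kappa(1+\kappa)^{j-1}$, the same geometric/binomial telescoping for Cases 1--2, and the same observation that linear structure kills all but the maximal chain for Cases 3--5. Isolating the bound on $S_j$ as a single reusable estimate is a slightly cleaner organization, but the substance is identical.
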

\begin{proof}
For the first identity, the coefficient from~\Cref{thm:convergence} can be upper bounded by
\begin{align}
    \label{eq:eqbinom}
    L < \lambda\paren{1+\sum_{j=1}^p\sum_{\ell=0}^{j-1}{\binom{j-1}{\ell}}\kappa^{\ell+1}}
\end{align}
because the number of paths of length $\ell + 1$ between $j$ and $0$ corresponds exactly to $\binom{j-1}{\ell}$. By the binomial theorem and the geometric sum identity,
$$\lambda\qty(1+\sum_{j=1}^p \kappa (\kappa+1)^{j-1}) = \lambda\qty(1+ \kappa \frac{(\kappa+1)^p-1}{\kappa}) = \lambda (\kappa+1)^p $$
the formula is obtained.

By assuming the hypotheses of statement 2 on $L_i$, the expression of~\Cref{thm:convergence} becomes
\begin{align*}
    \sum_{j=1}^p\sum_{\sigma\in d_{0,j}}\prod_{m=1}^{\abs{\sigma}-1} K_{\sigma_m,\sigma_{m-1}} \leq \sum_{j=1}^p\sum_{\ell=0}^{j-1}{\binom{j-1}{\ell}}\kappa^{\ell+1}=(\kappa+1)^p - 1 < 1
\end{align*}
by using the same strategy as for the first statement. Isolating $\kappa$ yields the second expression.

For the third expression, this means that the only term left in~\eqref{eq:eqbinom} is
\begin{align}
    \label{eq:eqfraction}
    \lambda\qty(1+\sum_{j=1}^p\kappa^j) = \lambda\qty(1+\kappa\frac{\kappa^p - 1}{\kappa -1}) < 1.
\end{align}
The fourth expression is obtained by setting $\lambda =1$ in~\eqref{eq:eqfraction} and removing the first unit term (because $\varphi$ is a Picard solver), then writing it in polynomial form. $\kappa<\frac{1}{2}$ is sufficient because it ensures that for any $p$, the polynomial is positive. The last statement is obtained by considering only $j=p$ in the summation then simplifying.
\end{proof}
The fifth statement is well-known, for instance from~\cite{Traub_1964} (\S~2, Thm.~4), but can still be derived from the general bound of~\Cref{thm:convergence}, which can be applied in the case where the dependence structure is more complex.

This theorem is still valid in the infinite-dimensional case. As an example, consider the system of equations
\begin{align*}
    -\nabla\cdot( D_1 \nabla \mathbf{u}_1) = f_1(\mathbf{u}_1,\mathbf{u}_2)\\
    -\nabla\cdot( D_2 \nabla \mathbf{u}_2) = f_2(\mathbf{u}_1,\mathbf{u}_2)
\end{align*}
on a smooth bounded domain with homogeneous Dirichlet boundary conditions. It can be solved in an iterative fashion using the Picard solver $\varphi(\mathbf{x},\mathbf{u}_1,\mathbf{u}_2) = (\mathbf{u}_1,\mathbf{u}_2)$ with the scheme
\begin{align*}
    -\nabla\cdot( D_1 \nabla \mathbf{u}_1^{k+1}) &= f_1(\mathbf{u}_1^k,\mathbf{u}_2^k),\\
    -\nabla\cdot( D_2 \nabla \mathbf{u}_2^{k+1}) &= f_2(\mathbf{u}_1^{k+1},\mathbf{u}_2^k),
\end{align*}
where $\mathbf{x}^k = (\mathbf{u}_1^k,\mathbf{u}_2^k)$. Since $p=2$, the problem is necessarily linearly structured. We can show using the Poincaré and Cauchy-Schwarz inequalities and the mean value theorem that
\begin{align*}
    \|\mathbf{u}_1(\mathbf{x}) - \mathbf{u}_1(\mathbf{z})\|_{H^1_0} \leq \underbrace{ C_P^2\frac{\|\nabla_\mathbf{u} f_1(\xi(\mathbf{x},\mathbf{z}))\|_\infty}{\min_\Omega D_1}}_{K_{1,0}} \| \mathbf{x} - \mathbf{z}\|_{(H^1_0)^2},\\
    \|\mathbf{u}_2(\mathbf{x},\mathbf{u}_1) - \mathbf{u}_2(\mathbf{z},\mathbf{u}_1)\|_{H^1_0} \leq \underbrace{ C_P^2\frac{\|\partial_{\mathbf{u}_2} f_2(\mathbf{u}_1,\xi(\mathbf{x}_2,\mathbf{z}_2))\|_\infty}{\min_\Omega D_2}}_{K_{2,0}} \| \mathbf{x} - \mathbf{z}\|_{(H^1_0)^2},\\
    \|\mathbf{u}_2(\mathbf{x},\mathbf{u}_1) - \mathbf{u}_2(\mathbf{x},\mathbf{z}_1)\|_{H^1_0} \leq \underbrace{ C_P^2\frac{\|\partial_{\mathbf{u}_1} f_2(\xi(\mathbf{u}_1,\mathbf{z}_1),\mathbf{x}_2)\|_\infty}{\min_\Omega D_2}}_{K_{2,1}} \| \mathbf{u}_1 - \mathbf{z}_1\|_{H^1_0}.
\end{align*}
where $C_P$ is the Poincaré constant for $\Omega$, and $\xi(a,b)=\lambda a+(1-\lambda)b$ for some $\lambda\in(0,1)$.
Thus,
\begin{align*}
    K_{i,j} \leq \kappa := \frac{C_P^2\sum_{i,j=1}^2  \|\partial_{\mathbf{u}_i} f_j\|_\infty}{\min(\min_\Omega D_1, \min_\Omega D_2)} ,
\end{align*}
assuming $f_i$ are differentiable and Lipschitz continuous in each of their arguments. Since this is a linearly structured Picard solver, it corresponds to the fourth statement. If this constant is smaller than $\frac{\sqrt{5}-1}{2}=\phi^{-1} \approx 0.619$ ($\phi$ is the golden ratio), then the fixed point problem has a unique solution.

\subsubsection{ROM Approximation}

Assume the $i$-th equation is solved using a PROM, meaning $\mathbf{u}_i \approx \widehat{\mathbf{u}}_i$, where $\widehat{\mathbf{u}}_i$ is obtained by solving~\eqref{eq:lowdimsys} with matrices $A_i$ and right-hand sides $F_i$. Notice that the parameter used for the dimensionality reduction is $\mu_i^{k+1}$, and the snapshots are obtained from data at previous iterations. This will define an inexact fixed point operator
\begin{align*}
    G_k(\mathbf{x}) = \varphi(\mathbf{x},\mathbf{u}_1,\dots,\widehat{\mathbf{u}}_i,\dots,\widehat{\mathbf{u}}_p).
\end{align*}
Notice that because of the dependence on previous solutions, all solutions $\mathbf{u}_j$ with $j>i$ are actually perturbed as well. The perturbed solutions are defined recursively in the following way:
\begin{align*}
    \widehat{\mathbf{u}}_{i+1} &= \mathbf{u}_{i+1}(\mathbf{x},\dots,\widehat{\mathbf{u}}_i),\\
    \widehat{\mathbf{u}}_{i+2} &= \mathbf{u}_{i+2}(\mathbf{x},\dots,\widehat{\mathbf{u}}_i,\widehat{\mathbf{u}}_{i+1}),\\
    &\vdots
\end{align*}
To measure the total error committed by using $G_k$ instead of $G$, the error must be propagated through the successive perturbed solutions.

\begin{theorem}
\label{eq:errROMCoupled}
Under the same hypotheses on $\varphi$ and $\mathbf{u}_j$ ($j=1,\dots,p$) as in~\Cref{thm:convergence}, if the approximation $\mathbf{u}_i \approx \widehat{\mathbf{u}}_i$ is made to obtain $G_k$, the sequence
    \begin{align*}
        \delta^{k+1}_{i,p} = \left(L_i+\sum_{j={i+1}}^p L_j\sum_{\sigma\in d_{i,j}}\prod_{m=1}^{\abs{\sigma}-1}K_{\sigma_m,\sigma_{m+1}}\right) \|A_i^{-1}(\mu_i^{k+1})\| \|r_i(\mu_i^{k+1},\widehat{\mathbf{u}}^{k+1}_i)\|
    \end{align*}
    is an upper bound on the approximation error in the sense of~\Cref{hyp:controled}.
\end{theorem}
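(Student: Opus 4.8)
The plan is to decompose the operator difference using the Lipschitz continuity of $\varphi$, propagate the single ROM perturbation through the dependent solutions by reusing the induction of \Cref{thm:convergence}, and close the estimate with the residual bound \eqref{eq:errgeneralROM}. All evaluations are understood to take place at the current inexact iterate $\widehat{\mathbf{x}}^k$, so the resulting bound is precisely the quantity controlled in \Cref{hyp:controled}.

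First I would expand $\norm{G(\mathbf{x}) - G_k(\mathbf{x})} = \norm{\varphi(\mathbf{x},\mathbf{y}_1,\dots,\mathbf{y}_p) - \varphi(\mathbf{x},\mathbf{y}_1,\dots,\tilde{\mathbf{y}}_i,\dots,\tilde{\mathbf{y}}_p)}$ and observe that the first $i-1$ solution arguments coincide in both evaluations, since by the dependence structure they do not involve $\mathbf{y}_i$; hence only the slots $i,\dots,p$ contribute. Lipschitz continuity of $\varphi$ in each argument then yields
\[
    \norm{G(\mathbf{x}) - G_k(\mathbf{x})} \leq L_i\norm{\mathbf{y}_i - \tilde{\mathbf{y}}_i} + \sum_{j=i+1}^p L_j \norm{\mathbf{y}_j - \tilde{\mathbf{y}}_j}.
\]

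Next I would control each propagated discrepancy $\norm{\mathbf{y}_j - \tilde{\mathbf{y}}_j}$ for $j>i$. Because the common arguments $\mathbf{x},\mathbf{y}_1,\dots,\mathbf{y}_{i-1}$ drop out, the Lipschitz continuity of the solution maps gives the recursion $\norm{\mathbf{y}_j - \tilde{\mathbf{y}}_j} \leq K_{j,i}\norm{\mathbf{y}_i - \tilde{\mathbf{y}}_i} + \sum_{\ell=i+1}^{j-1} K_{j,\ell}\norm{\mathbf{y}_\ell - \tilde{\mathbf{y}}_\ell}$, which is formally identical to \eqref{eq:borneyj} with the source index $0$ (the variable $\mathbf{x}$) replaced by the source index $i$ (the perturbed solution $\mathbf{y}_i$). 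The same strong induction carried out in \Cref{thm:convergence}, now with base case $j=i+1$ and sums taken over strictly decreasing sequences from $j$ down to $i$, then produces $\norm{\mathbf{y}_j - \tilde{\mathbf{y}}_j} \leq \bigl(\sum_{\sigma\in d_{i,j}}\prod_{m=1}^{\abs{\sigma}-1} K_{\sigma_m,\sigma_{m+1}}\bigr)\norm{\mathbf{y}_i - \tilde{\mathbf{y}}_i}$.

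Substituting this back and factoring out the common $\norm{\mathbf{y}_i - \tilde{\mathbf{y}}_i}$ reproduces the bracketed prefactor of $\delta_{i,p}^k$. It then remains only to bound the base ROM error by the residual estimate \eqref{eq:errgeneralROM} applied to the $i$-th system, $\norm{\mathbf{y}_i - \tilde{\mathbf{y}}_i} \leq \norm{A_i^{-1}(\mu^k)}\,\norm{r_i(\mu^k,\tilde{\mathbf{y}}_i)}$, which recovers $\delta_{i,p}^k$ exactly. The only point requiring care, rather than a genuine obstacle, is confirming that the induction of \Cref{thm:convergence} transfers verbatim once the source is shifted from index $0$ to index $i$; since the combinatorial bookkeeping of the sets $d_{i,j}$ is already phrased for an arbitrary lower index, this transfer is immediate and no new estimate is needed.
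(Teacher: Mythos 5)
Your proposal is correct and follows essentially the same route as the paper's own proof: the paper simply states that the argument is that of \Cref{thm:convergence} ``mutatis mutandis,'' starting from the Lipschitz expansion $\norm{G(\mathbf{x})-G_k(\mathbf{x})}\leq \sum_{j=i}^p L_j\norm{\mathbf{y}_j-\tilde{\mathbf{y}}_j}$ and the recursion $\norm{\mathbf{y}_j-\tilde{\mathbf{y}}_j}\leq \sum_{\ell=i}^{j-1}K_{j,\ell}\norm{\mathbf{y}_\ell-\tilde{\mathbf{y}}_\ell}$, then closing with \eqref{eq:errgeneralROM}. You have merely made explicit the shift of the induction's source index from $0$ to $i$, which the paper leaves implicit.
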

\begin{proof}
    The proof is identical to that of the previous theorem, mutatis mutandis, starting from the inequalities
    \begin{align*}
    \abs{G(\mathbf{x})-G_k(\mathbf{x})} = \abs{\varphi(\mathbf{x},\mathbf{u}_1,\dots,\mathbf{u}_p) - \varphi(\mathbf{x},\mathbf{u}_1,\dots,\widehat{\mathbf{u}}_i,\dots,\widehat{\mathbf{u}}_p)}\leq \sum_{j=i}^p L_j \| \mathbf{u}_j - \widehat{\mathbf{u}}_j\|
    \end{align*}
    and, for $j>i$,
    \begin{align*}
        \norm{\mathbf{u}_j-\widehat{\mathbf{u}}_j} \leq \sum_{\ell = i}^{j-1} K_{j,\ell} \norm{\mathbf{u}_\ell-\widehat{\mathbf{u}}_\ell},
    \end{align*}
    and then by expressing every error in terms of $\norm{\mathbf{u}_i-\widehat{\mathbf{u}}_i}$. Since this last approximation is made with a ROM, the result follows from~\eqref{eq:errgeneralROM}.
\end{proof}

It is interesting to point out a few very common cases:
\begin{enumerate}
    \item In the case $p=1$, $i=1$ (necessarily) and
\begin{align*}
    \delta^k_{1,1} = L_1 \|A_1^{-1}(\mu_1^k)\| \|r_1(\mu_1^k,\widehat{\mathbf{u}}^k_1)\|.
\end{align*}
    \item In the case $p=2$ and $i=1$,
    \begin{align*}
        \delta^k_{1,2} = (L_1 + L_2K_{2,1})\|A_1^{-1}(\mu_1^k)\| \|r_1(\mu_1^k,\widehat{\mathbf{u}}^k_1)\|.
    \end{align*}
    \item In the case $p=2$ and $i=2$,
    \begin{align*}
        \delta^k_{2,2} = L_2\|A_2^{-1}(\mu_2^k)\| \|r_2(\mu_2^k,\widehat{\mathbf{u}}_2^k)\|.
    \end{align*}
\end{enumerate}

The case where multiple systems are approximated by ROMs can be simply derived.

\begin{corollary}

    Assume the same regularity conditions as in the previous theorems. In the case where multiple systems are approximated with a ROM, say $\mathbf{u}_{i}\approx\widehat{\mathbf{u}}_{i}$ for $i\in\mathcal{I}\subseteq\{1,\dots,p\}$, then the sequence
    \begin{align*}
        \delta^k_{\mathcal{I},p} = \sum_{i\in\mathcal{I}} \delta^k_{i,p}
    \end{align*}
    is an upper bound on the approximation error in the sense of~\Cref{hyp:controled}.
\end{corollary}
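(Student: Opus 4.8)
The plan is to reduce the multiple-approximation case to a superposition of the single-approximation bounds of \Cref{eq:errROMCoupled}, exploiting the fact that every inequality used there to propagate the error is linear in the perturbations with nonnegative coefficients. First I would introduce, for each index, the direct ROM error source $\epsilon_i = \norm{A_i^{-1}(\mu^k)}\norm{r_i(\mu^k,\tilde{\mathbf{y}}_i)}$, setting $\epsilon_i=0$ whenever $i\notin\mathcal{I}$. Writing $\hat{\mathbf{y}}_j$ for the solution actually produced by $G_k$ (perturbed either by its own ROM or by an upstream ROM), the starting point is again the $\varphi$-Lipschitz estimate
\begin{align*}
    \norm{G(\mathbf{x})-G_k(\mathbf{x})} \leq \sum_{j=1}^p L_j \norm{\mathbf{y}_j-\hat{\mathbf{y}}_j},
\end{align*}
together with the solution-map recursion $\norm{\mathbf{y}_j-\hat{\mathbf{y}}_j}\leq \epsilon_j + \sum_{\ell=1}^{j-1}K_{j,\ell}\norm{\mathbf{y}_\ell-\hat{\mathbf{y}}_\ell}$, in which the source $\epsilon_j$ now plays the role that the single ROM residual played in \Cref{eq:errROMCoupled}.

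Next I would solve this linear recursion by the same path-counting induction used in \Cref{thm:convergence}. Because the recursion is linear with nonnegative weights, its response to the whole collection of sources is the sum of the responses to each source taken in isolation; concretely,
\begin{align*}
    \norm{\mathbf{y}_j-\hat{\mathbf{y}}_j}\leq \sum_{i\leq j}\paren{\sum_{\sigma\in d_{i,j}}\prod_{m=1}^{\abs{\sigma}-1}K_{\sigma_m,\sigma_{m+1}}}\epsilon_i,
\end{align*}
where the trivial path contributes the factor $1$ when $i=j$. The key regrouping step is to substitute this into the $\varphi$-Lipschitz estimate and exchange the order of summation, collecting terms by the source index $i$ first:
\begin{align*}
    \norm{G(\mathbf{x})-G_k(\mathbf{x})} \leq \sum_{i=1}^p \epsilon_i \paren{L_i + \sum_{j=i+1}^p L_j \sum_{\sigma\in d_{i,j}}\prod_{m=1}^{\abs{\sigma}-1}K_{\sigma_m,\sigma_{m+1}}}.
\end{align*}
Since $\epsilon_i=0$ for $i\notin\mathcal{I}$, only the indices in $\mathcal{I}$ survive, and each surviving summand is exactly $\delta^k_{i,p}$, yielding $\norm{G-G_k}\leq\sum_{i\in\mathcal{I}}\delta^k_{i,p}$.

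I expect the main obstacle to be the rigorous justification of the superposition step, in particular the subtlety that when several systems are reduced simultaneously, the downstream ROM residuals are in principle evaluated at perturbed parameters rather than at the exact $\mu^k$ used to define each $\delta^k_{i,p}$. I would handle this by working throughout with the first-order (linearized) propagation bounds, in which each ROM contributes an independent nonnegative source $\epsilon_i$ and the nonnegativity of the coefficients $K_{j,\ell}$ and $L_j$ guarantees that the combined upper bound never underestimates the true error. Equivalently, one can organize the argument as a telescoping sum over the elements of $\mathcal{I}$, switching on one approximation at a time and bounding each increment by the corresponding $\delta^k_{i,p}$ through \Cref{eq:errROMCoupled}; the additivity of the final bound then follows directly from the triangle inequality.
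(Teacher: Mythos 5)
Your proposal is correct, and its primary route differs from the paper's. The paper proves the corollary by induction on $\abs{\mathcal{I}}$: it introduces an intermediate iterate in which the $i_{m+1}$-th ROM is ``switched off'' (every $\tilde{\mathbf{y}}_{i_{m+1}}$ replaced by $\mathbf{y}_{i_{m+1}}$), applies the triangle inequality to split the error into the $m$-ROM error plus a single-ROM increment, and bounds the increment by invoking \Cref{eq:errROMCoupled} as a black box --- this is exactly the ``telescoping'' organization you mention only in your closing paragraph as an equivalent alternative. Your main argument instead solves the full perturbation recursion in one pass: you attach a nonnegative source $\epsilon_i$ to each reduced system, propagate all sources simultaneously through the path-counting machinery of \Cref{thm:convergence}, and obtain the additive bound by exchanging the order of summation. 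What the paper's induction buys is modularity (the single-ROM theorem is reused verbatim and no coefficients need to be re-derived); what your superposition buys is transparency about \emph{why} the bound is additive --- the linearity and nonnegativity of the propagation weights --- rather than additivity appearing as an artifact of repeated triangle inequalities. You also correctly identify the one genuine subtlety, namely that when several systems are reduced the residual $\norm{r_i(\cdot,\tilde{\mathbf{y}}_i)}$ must be evaluated at the perturbed parameter $\widehat{\mu}_i^k$ containing upstream ROM solutions rather than at the unperturbed $\mu_i^k$; the paper handles this not in the proof but in the remark immediately following the corollary, so your explicit treatment of it is, if anything, more careful than the published argument.
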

\begin{proof}
    The proof is made by induction on $\abs{\mathcal{I}}$. Assume that, for some $m$ such that $1\leq m<p$,
    \begin{align*}
        \delta^k_{\{i_1,\dots,i_m\},p} = \sum_{j=1}^{m}\delta^k_{i_j,p}.
    \end{align*}
    In the case where the index set is enlarged to $\mathcal{I}=\{i_1,\dots,i_{m+1}\}$, the approximation error is
    \begin{align*}
        \delta^k_{\{i_1,\dots,i_{m+1}\},p} = \|\varphi(\mathbf{x},\mathbf{u}_1,\dots,\mathbf{u}_p) - \varphi(\mathbf{x},\mathbf{w}_1,\dots,\mathbf{w}_{p})\|,
    \end{align*}
    where $\mathbf{w}_i$ are defined recursively as
    \begin{align*}
    \mathbf{w}_{i} =
        \begin{cases}
             \widehat{\mathbf{u}}_i(\mathbf{x},\mathbf{w}_1,\dots,\mathbf{w}_{i-1}) & i \in \mathcal{I},\\
           \mathbf{u}_i(\mathbf{x},\mathbf{w}_1,\dots,\mathbf{w}_{i-1}) & \text{otherwise}.
        \end{cases}
    \end{align*}
    Consider the term
    \begin{align*}
        \varphi(\mathbf{x},\mathbf{z}_1,\dots,\mathbf{z}_{p})
    \end{align*}
    with the following recursive definition
    \begin{align*}
    \mathbf{z}_{i} =
        \begin{cases}
             \widehat{\mathbf{u}}_i(\mathbf{x},\mathbf{z}_1,\dots,\mathbf{z}_{i-1}) & i \in \mathcal{I}\setminus\{i_{m+1}\},\\
           \mathbf{u}_i(\mathbf{x},\mathbf{z}_1,\dots,\mathbf{z}_{i-1}) & \text{otherwise},
        \end{cases}
    \end{align*}
    i.e.\ the same as $\mathbf{w}_i$ but ignoring the index $i_{m+1}$.
    This is equivalent to a fixed point step where the $i_{m+1}$-th system is solved with the FOM instead of the ROM (every occurrence of $\widehat{\mathbf{u}}_{i_{m+1}}$ becomes $\mathbf{u}_{i_{m+1}}$). The triangle inequality yields the following:
    \begin{align*}
        \|\varphi(\mathbf{x},\mathbf{u}_1,\dots,\mathbf{u}_p) - \varphi(\mathbf{x},\mathbf{w}_1\dots,\mathbf{w}_p)\| &\leq
        \|\varphi(\mathbf{x},\mathbf{u}_1,\dots,\mathbf{u}_p) -  \varphi(\mathbf{x},\mathbf{z}_1\dots,\mathbf{z}_p)\| \\
        &+ \|\varphi(\mathbf{x},\mathbf{z}_1\dots,\mathbf{z}_p) - \varphi(\mathbf{x},\mathbf{w}_1\dots,\mathbf{w}_p)\|.
    \end{align*}
    The first term is simply $\delta^k_{\{i_1,\dots,i_m\},p}$. Since the only difference in the second error term is the approximation of a single linear system ($\mathbf{w}_j=\mathbf{z}_j$ for $j<i_{m+1}$),~\Cref{eq:errROMCoupled} can be applied. By applying the induction hypothesis, the previous inequality becomes:
    \begin{align*}
        \|\varphi(\mathbf{x},\mathbf{u}_1,\dots,\mathbf{u}_p) - \varphi(\mathbf{x},\mathbf{w}_1\dots,\mathbf{w}_p)\| &\leq \delta^k_{\{i_1,\dots,i_m\},p} + \delta_{i_{m+1},p} = \sum_{j=1}^{m+1} \delta_{i_{j},p}.
    \end{align*}
    The base case ($\mathcal{I}=\{i_1\}$) is trivially satisfied by~\Cref{eq:errROMCoupled}. By induction, the property will hold for any $\mathcal{I}=\{i_1,\dots,i_m\}\subseteq \{1,\dots,p\}$.

\end{proof}
\begin{remark}
    Because of the definition of the $\mathbf{w}_i$, the residuals $r_i(\mu_i^{k+1}, \widehat{\mathbf{u}}_i^{k+1})$ in $\delta_{i,p}^{k+1}$ have to be evaluated with information computed at the current iteration, meaning that $\mu_i^{k+1}$ may contain both FOM and ROM solutions, depending on which solver was used for each subsystem at the current iteration.
\end{remark}

Again, a common case is worth pointing out. In the case where $p=2$ and both equations are solved approximately, the upper bound becomes
\begin{align*}
     \delta^k_{\{1,2\},2} = (L_1 + L_2K_{2,1})\|A_1^{-1}(\mu_1^k)\| \|r_1(\mu_1^k,\widehat{\mathbf{u}}_1)\| + L_2\|A_2^{-1}(\widehat{\mu}_2^k)\| \|r_2(\widehat{\mu}_2^k,\widehat{\mathbf{u}}^k_2)\|,
\end{align*}
where $\widehat{\mu}_2^k = (\mathbf{x}^{k-1},\widehat{\mathbf{u}}^k_1)$.

\section{Numerical Illustration}
\label{sec:application}

The algorithm will be applied to the solution of coupled quasi-linear PDEs, for instance the Navier-Stokes problem coupled with an advection-diffusion problem. This specializes the theory of nested inexact fixed points from~\cite{Birken_2015} to the particular case of PROMs.

Earlier, it was mentioned that the ROM-based inexact fixed point algorithm does not guarantee an acceleration in itself, but only that the error is more accurately approximated by $\eta^k$ than by using the pure residual; this will be illustrated in the following section. However, it is important to mention in which context does an acceleration truly happens. The first important remark is that a high-dimensional finite element matrix has to be assembled to construct the ROM (the reduced matrix is obtained by $V^\top A_i(\mu_i^k)V$) and to obtain the error estimator (it appears in the definition of the residual). Even if the total number of FOM solves might be globally decreased, the number of matrix and residual assembly is increased. Consequently, this computation becomes the new dominant cost of the algorithm, and it must not exceed the cost saved by replacing the FOM with a ROM.

\begin{hypothesis}
    The assembly cost of the finite element matrix and residual is small or negligible when compared to the cost of solving the FOM.
    \label{Hypo:smallcost}
\end{hypothesis}

There are two clear regimes in which this happens, and the algorithm will yield a speedup:
\begin{enumerate}
    \item The problem has a small to moderate number of degrees of freedom (ensuring matrix/residual assembly is very fast), but there is a strong nonlinearity, requiring a lot of iterations for the nonlinear solver to converge (i.e., the ROM solves replace many FOM solves).
    \item The problem has a very large number of degrees of freedom, requiring the use of parallelism to reach a solution in a reasonable time, which also ensures that matrix/residual assembly is very fast because this computation scales much better than the solution of a linear system. Replacing a FOM evaluation with a ROM evaluation also has a more significant effect.
\end{enumerate}
To illustrate the impact of parallelism on the observed speedups, the algorithm will be tested on a 2D heated lid-driven cavity problem, and then on its 3D variant, solved with various numbers of CPUs. The two-dimensional problem will not yield good speedups, but will allow one to validate the high-accuracy of the method and study its behaviour under changing parameters. The three-dimensional problem, on the contrary, will reach much better speedups.

\subsection{Alternate Solution of Coupled Equations}
\label{section:alternatesolution}

In a general case, we could be interested in solving two quasi-linear partial differential equations
\begin{align*}
     \Lambda_1(\mathbf{u}_1,\mathbf{u}_2)\mathbf{u}_1 &= f_1(\mathbf{u}_1,\mathbf{u}_2),\\
        \Lambda_2(\mathbf{u}_1,\mathbf{u}_2) \mathbf{u}_2 &= f_2(\mathbf{u}_1,\mathbf{u}_2),
\end{align*}
where $\Lambda_1(\cdot,\cdot)$ and $\Lambda_2(\cdot,\cdot)$ are elliptic linear differential operators. Using a fixed point scheme, the solutions would be obtained in an alternating manner, given $\mathbf{u}_1^0$ and $\mathbf{u}_2^0$:
\begin{align*}
\Lambda_1(\mathbf{u}_1^{k+1},\mathbf{u}_2^{k})\mathbf{u}_1^{k+1} &= f_1(\mathbf{u}_1^{k+1},\mathbf{u}_2^{k}),\\
        \Lambda_2(\mathbf{u}_1^{k+1},\mathbf{u}_2^{k+1}) \mathbf{u}_2^{k+1} &= f_2(\mathbf{u}_1^{k+1},\mathbf{u}_2^{k+1}).
\end{align*}
A linearization step~\citep{Thomée_2006} can be applied under suitable regularity assumptions to reduce the complexity of the problem, by requiring that only linear systems be solved at each step:
\begin{align*}
\Lambda_1(\mathbf{u}_1^{k},\mathbf{u}_2^{k})\mathbf{u}_1^{k+1} &= f_1(\mathbf{u}_1^{k},\mathbf{u}_2^{k}),\\
        \Lambda_2(\mathbf{u}_1^{k+1},\mathbf{u}_2^{k}) \mathbf{u}_2^{k+1} &= f_2(\mathbf{u}_1^{k+1},\mathbf{u}_2^{k}).
\end{align*}
Let the discretized linear equations be
  \begin{align*}
A_1(\mathbf{u}_1^{k},\mathbf{u}_2^{k})\mathbf{u}_1^{k+1} &= F_1(\mathbf{u}_1^{k},\mathbf{u}_2^{k}),\\
        A_2(\mathbf{u}_1^{k+1},\mathbf{u}_2^{k}) \mathbf{u}_2^{k+1} &= F_2(\mathbf{u}_1^{k+1},\mathbf{u}_2^{k}).
\end{align*}
    obtained, for instance, by FEM (from now on, by abuse of notation, the solutions $\mathbf{u}_1^{k}$ will be considered as discretized).

       \begin{hypothesis} \label{Hypo:NormeInv}
   The matrices $A_1$ and $A_2$ are invertible and their inverses are uniformly bounded on a subset containing all of the fixed point iterations, i.e.,
   \begin{equation*}
       \| A_1(\cdot,\cdot)^{-1} \|, \| A_2(\cdot,\cdot)^{-1} \| < M,
   \end{equation*}
   for some real constant $M>0$.
\end{hypothesis}

\begin{hypothesis} \label{Hypo:LipS} The solution operator $(\mathbf{x},\mathbf{u}_1)\mapsto \mathbf{u}_2$ is $K_{2,1}$-Lipschitz continuous w.r.t.\ $\mathbf{u}_1$ uniformly over $\mathbf{x}$,
   \begin{equation*}
       \|{\mathbf{u}_2(\mathbf{x},\mathbf{w}) - \mathbf{u}_2(\mathbf{x},\mathbf{z})}\| \leq K_{2,1} \|\mathbf{w} - \mathbf{z} \|,\quad \forall \mathbf{x},\mathbf{w},\mathbf{z}.
   \end{equation*}
\end{hypothesis}

    Following the notation of the previous section, note that this is equivalent to $\mathbf{x}^k=(\mathbf{u}_1^{k},\mathbf{u}_2^{k})$, $\mathbf{u}_1^{k+1}=\mathbf{u}_1(\mathbf{x}^k)$ and $\mathbf{u}_2^{k+1} = \mathbf{u}_2(\mathbf{x}^k,\mathbf{u}_1(\mathbf{x}^k))$. The fixed point iteration to obtain the simultaneous solution to both equations is given by
    \begin{align*}
    \varphi(\mathbf{x},\mathbf{u}_1, \mathbf{u}_2) =  (\mathbf{u}_1, \mathbf{u}_2).
    \end{align*}
   \begin{remark}
    The function $\varphi$ depends on the choice of fixed point scheme chosen to solve the problem, and not on the problem itself.
\end{remark}
The function $\varphi$ is entrywise Lipschitz continuous with constants $L_0=0$ and $L_1=L_2=1$. As a consequence of~\Cref{eq:errROMCoupled}, if a ROM is used on the first equation (special case 2), the approximation error is upper bounded by
\begin{align*}
        \delta^k = (1+K_{2,1})M \|r_1(\mu_1^k,\widehat{\mathbf{u}}_1^k)\|.
\end{align*}
If a ROM is used on the second equation (special case 3), then
\begin{align*}
        \delta^k = M \|r_2(\mu_2^k,\widehat{\mathbf{u}}_2^k)\|.
\end{align*}
Finally, if the ROM is used on both equations,
\begin{align*}
        \delta^k = (1 + K_{2,1})M\|r_1(\mu_1^k,\widehat{\mathbf{u}}_1)\| + M \|r_2(\widehat{\mu}_2^k,\widehat{\mathbf{u}}_2^k)\|.
\end{align*}

\begin{remark}
    The norm used to compute any quantity in this section is the Euclidean norm on the finite element vector of degrees of freedom (DoFs). The distance mentioned in~\Cref{cor:main} will be the metric induced by this norm.
\end{remark}

\subsection{Constants Estimation}
The error estimation depends on multiple constants related to the regularity of the data. These constants can be hard to determine analytically for complex nonlinear problems. However, they can be estimated from iterations of the fixed point scheme.

The quantity $\|A_1^{-1}\|$ is required, which can be computed 
efficiently via power iterations when the matrix is already factorized to solve the FOM linear system. However, the upper bound $\|A_1^{-1}\|\|r_1\|$ is typically a severe overestimate of the true error 
$\|u_h - \widehat{u}\|$. When 
this bound is used in the algorithm, $\delta^k$ is so 
pessimistic that ROM steps are rejected at virtually every 
iteration, collapsing the algorithm to a pure FOM scheme and 
eliminating any computational benefit. Instead, the quantity $\|A_1^{-1}\|$ is replaced by the tighter 
estimate
\begin{align*}
  M \approx \frac{\|\mathbf{u}_1^{k+1}\|}{\|F_1(\mathbf{u}_1^k,
  \mathbf{u}_2^k)\|}.
\end{align*}
This estimate is not a 
provable upper bound on $\|A_1^{-1}\|$ and therefore places the 
algorithm in the practical regime of concluding remark of
\Cref{section:errorprop}. However, it reflects the 
actual magnitude of the error much more accurately, and the 
numerical experiments of \Cref{section:ROMQM} demonstrate that 
the resulting algorithm both achieves meaningful acceleration and 
produces a final solution whose error is within the user-defined 
tolerance and comparable to the reference high-fidelity solution.

$K_{2,1}$ can be estimated from successive iterations:
\begin{equation*}
     \|\mathbf{u}_2^{k+1}-\mathbf{u}_2^k\| \leq K_{2,1} \|\mathbf{u}_1^{k+1}-\mathbf{u}_1^k\| \implies K_{2,1} \approx
    \frac{\|\mathbf{u}_2^{k+1}-\mathbf{u}_2^k\|}{\|\mathbf{u}_1^{k+1}-\mathbf{u}_1^k\|}.
\end{equation*}
This is equivalent to computing an approximate derivative of the map $\mathbf{u}_1 \mapsto \mathbf{u}_2$ using a forward finite difference, with an error $O(\|\mathbf{u}_1^{k+1}-\mathbf{u}_1^k\|)$. Again, taking the maximum over all iterations where this value is computed gives an estimate for $K_{2,1}$.

\subsection{2D Heated Lid-Driven Cavity}

The implementation is done in Python, using the finite element software MEF++ and its Python interface mefpp4py, in combination with petsc4py~\citep{dalcinpazklercosimo2011} and slepc4py~\citep{Hernandez_2005}. The main interest in using these libraries is their parallel computing capabilities.
The SVD solver used in parallel is the thick-restart Lanczos algorithm from SLEPc. The linear systems are all solved using $LU$ decompositions (the factorization cannot be reused between iterations since both matrices have a dependence on the unknowns). Moreover, when a new snapshot replaces an old one in the snapshot matrix, the SVD is updated incrementally using the algorithm for low-rank updates from \cite{Brand_2006}, and adapted in parallel following the approach of \citep{Kuhl_Fischer_Hinze_Rung_2024}. The computation requires only a local, low-dimensional SVD, which is then broadcast to the other processes. Because of the propagation of truncation error in the algorithm, the full SVD is recomputed once every few incremental calls to restart the incremental SVD.

The methodology is tested on a multiphysics simulation. The incompressible, non-Newtonian, steady Navier-Stokes equations are coupled to heat diffusion and convection phenomena. The problem studied is the heated lid-driven cavity flow \cite{Dumon_Allery_Ammar_2013}, a variant of the well-studied lid-driven cavity flow (e.g., \citep{Sahin_Owens_2003}).  The domain $\Omega$ is a unit square with the left wall $\Gamma_\mathrm{left}$ heated at $\theta = 1$ and the right wall $\Gamma_\mathrm{right}$ heated at $\theta=0$, the bottom wall $\Gamma_\mathrm{bottom}$ and top lid $\Gamma_\mathrm{top}$ are both adiabatic (\Cref{fig:geometry}). A no-slip condition is enforced on the left, right and bottom walls, and a tangent velocity $\mathbf{u}(x,y) = (1,0)$ is imposed at the lid. The equations and boundary conditions are
\begin{align}
&\begin{cases}
     \mathbf{u}\cdot \nabla \mathbf{u} -\nabla \cdot (\nu(\mathbf{u},\theta)\nabla \mathbf{u}) + \nabla p = f(\theta) &\qq{in} \Omega\\
     \nabla\cdot \mathbf{u} = 0 &\qq{in} \Omega\\
     \mathbf{u} =0 &\qq{on}\Gamma_\mathrm{left}\cup \Gamma_\mathrm{right} \cup \Gamma_\mathrm{bottom}\\
     \mathbf{u}_x = 1 &\qq{on}\Gamma_\mathrm{top} \\
     \mathbf{u}_y = 0 &\qq{on}\Gamma_\mathrm{top}
\end{cases}
\label{eq:stokes}
\\
&\begin{cases}
    -D\Delta \theta + \mathbf{u}\cdot\nabla \theta = 0 &\qq{in} \Omega\\
    \theta = 1 &\qq{on}\Gamma_\mathrm{left}\\
    \theta = 0  &\qq{on}\Gamma_\mathrm{right}\\
    \nabla\theta \cdot n = 0  &\qq{on}\Gamma_\mathrm{top} \cup \Gamma_\mathrm{bottom}\\
\end{cases}
\label{eq:heat}
\end{align}

\begin{figure}[ht]
    \centering
\begin{tikzpicture}[scale=0.6]

% --- Main square ---
\def\L{6}  % side length

\draw[line width=1.2pt] (0,0) rectangle (\L,\L);

% % --- Dashed orange cross (center lines) ---
% \draw[dashed, orange!80!red, line width=1pt] (\L/2, 0) -- (\L/2, \L);
% \draw[dashed, orange!80!red, line width=1pt] (0, \L/2) -- (\L, \L/2);

% --- Top boundary: inlet flow arrows and label ---
% Arrows above the top wall
\foreach \x in {0.6, 1.5, 2.5, 3.6, 4.7, 5.4} {
    \draw[->, line width=0.8pt] (\x, \L+0.3) -- (\x+0.55, \L+0.3);
}
\node[above] at (\L/2, \L+0.55) {$\mathbf{u}_x = 1$ \quad and \quad $\mathbf{u}_y = 0$};

% --- Bottom boundary: no-slip label ---
\node[below=0.2cm] at (\L/2, 0) {no slip};

% % Bottom dimension arrow
% \draw[<->, line width=0.8pt] (0, -1.1) -- (\L, -1.1);
% \node[below] at (\L/2, -1.1) {$d$};

% --- Left boundary: no-slip label ---
\node[left=0.2cm, align=center] at (0, \L/2) {no slip};

% --- Right boundary: no-slip label ---
\node[right=0.2cm, align=center] at (\L, \L/2) {no slip};

% % Right dimension arrow
% \draw[<->, line width=0.8pt] (\L+1.5, 0) -- (\L+1.5, \L);
% \node[right] at (\L+1.5, \L/2) {$d$};

% --- Interior labels ---
% Top-right quadrant: adiabatic
\node at ({\L/2}, {7*\L/8}) {adiabatic};

% Bottom-right quadrant: adiabatic
\node at ({\L/2}, {\L/8}) {adiabatic};

% % Top-left quadrant: hot
% \node[align=left] at ({\L/4}, {3*\L/4}) {hot \\ $T = T_h$};

% Bottom-left quadrant (mirrored): also hot region label sits in upper-left
% Actually hot label with T=Th is left half, cold is right half
% Re-reading image: left half is hot T=Th, right half is cold T=Tc
% hot label in left half, near horizontal centerline
\node[align=left] at ({\L/8}, {\L/2+0.35}) {hot};
\node[align=left] at ({\L/8}, {\L/2-0.35}) {$\theta = 1$};

% cold label in right half, near horizontal centerline
\node[align=right] at ({7*\L/8}, {\L/2+0.35}) {cold};
\node[align=right] at ({7*\L/8}, {\L/2-0.35}) {$\theta = 0$};

% --- Gravity arrow (top-left, outside) ---
\draw[->, line width=1pt] (-1.8, \L-0.5) -- (-1.8, \L-1.4);
\node[right] at (-1.8, \L-0.5) {$\mathbf{g}$};

\node[left] at (0,0) {$(0,0)$};

\node[right] at (\L,\L) {$(1,1)$};

% --- Coordinate axes (bottom-left corner, inside) ---
% \draw[->, line width=0.8pt] (0.1, 0.1) -- (0.9, 0.1);
% \node[below right] at (0.9, 0.1) {$x$};
% \draw[->, line width=0.8pt] (0.1, 0.1) -- (0.1, 0.9);
% \node[above left] at (0.1, 0.9) {$y$};

\end{tikzpicture}
\caption{The cavity's geometry and prescribed boundary conditions.}%
    \label{fig:geometry}
\end{figure}

The right-hand side of the momentum equation satisfies the Boussinesq approximation
 \begin{equation*}
     f(\theta) = -\beta\mathbf{g}\theta
 \end{equation*}
with $\mathbf{g}$ the gravitational acceleration vector and $\beta = 10 / |\mathbf{g}|$. The viscosity follows a Carreau law with an exponential dependance on temperature, i.e.,
\begin{equation*}
    \nu(\mathbf{u},\theta)= A\exp\bigg( \frac{B}{\theta+C}\bigg) \big( \nu_\infty + (\nu_0 - \nu_\infty)(1 + \lambda \| \nabla \mathbf{u}\|^{2})^{\frac{n-1}{2}} \big) \label{eq:visco}
\end{equation*}
The fluid is assumed to be shear-thinning, i.e., $\nu_\infty < \nu_0$. The values of these parameters are
\begin{equation*}
    A = 0.05,\quad
    B = 4,\quad
    C = 1,
\end{equation*}
\begin{equation*}
    \nu_\infty = 0.05,\quad
    \nu_0 = 0.5,\quad
    \lambda = 500,\quad
    n=0.3.
\end{equation*}
The heat diffusion coefficient in \eqref{eq:heat}, assumed to be constant, is $D = 0.01$.
According to the ranges of temperature and velocity, and to the parameters and geometry defined above, the dimensionless numbers characterizing the flow are 
 \begin{align*}
     \mathrm{Re}\sim 10^1 - 10^2 ,\quad \mathrm{Ri} =10, 
     \quad\mathrm{Pr}\sim 10^0-10^1,\quad \mathrm{Gr}= 10^3 - 10^5,
 \end{align*}
 respectively the Reynolds, Richardson, Prandtl, and Grashof numbers. These parameters (especially $\mathrm{Pr>1}$ and shear-thinning) can emulate the flow of non-Newtonian polymer solutions, which can be used to model some plastic recycling processes \citep{Savvas_Markatos_Papaspyrides_1994}.

The equations are non-linear, so an Oseen linearization \cite{John_2016} is applied to the convective term, and a semi-linearization is applied to the viscosity to make the system linear at each iteration, resulting in the following fixed point scheme:
\begin{align*}
    A_1\mathbf{u}_1^{k+1} = F_1 \leadsto &\begin{cases}
        \mathbf{u}^k\cdot \nabla \mathbf{u}^{k+1} -\nabla \cdot (\nu(\mathbf{u}^k,\theta^k)\nabla \mathbf{u}^{k+1}) + \nabla p^{k+1} = f(\theta^k),\\
        \nabla \cdot \mathbf{u}^{k+1} = 0,
    \end{cases}
    \\
    A_2 \mathbf{u}_2^{k+1} = F_2\leadsto &-D\Delta \theta^{k+1} + \mathbf{u}^{k+1}\cdot\nabla \theta^{k+1} = 0.
\end{align*}

% and the viscosity follows a three-parameter exponential law
%  \begin{align*}
%      \nu(\theta) = a \exp\qty({\frac{b}{\theta - c}}),
%  \end{align*}
% with $a = 0.005$, $b = 20$ and $c = -9$ (values within 0.02 and 0.04). The inlet velocity is set to a parabolic flow $\mathbf{u}_\mathrm{in}(x,y)=(0,1.8x(2-x))^\top$, and the inlet temperature and wall heat flux are set respectively to $\theta_\mathrm{in}=0$ and $\theta_\mathrm{wall}=0.12$. The thermal diffusion coefficient is set to $k_T = 0.04$. Those values generate a flow characterized by the following adimensional quantities:

% respectively the Reynolds, Richardson and Prandtl numbers. Numerically, it is validated that the fixed point operator $G$ arising from this setting is indeed a contraction, with Lipschitz constant $L\approx 0.9$.

The finite element method is used to discretize this system of PDEs. The momentum and incompressibility equations are coupled using the usual saddle-point formulation~\citep{Deteix_Diop_Fortin_2022}, with Taylor-Hood elements. Note that the variable $\mathbf{u}_1$ from~\Cref{section:alternatesolution} will represent the unknown of the monolithic mixed system, which corresponds to $\mathbf{u}_1=(\mathbf{u}_h,p_h)$. $P_2$ elements are used to discretize the temperature $\mathbf{u}_2=\theta_h$. Velocity and pressure combined have a total of 159\,560 DoFs and the temperature has 70\,849 DoFs.

First, a reference solution is obtained via standard Picard iterations with the high-fidelity solver and an absolute tolerance of $\varepsilon = 10^{-4}$ (which corresponds roughly in this case to a relative tolerance of $2.2\times 10^{-7}$). When the ROM is only used on~\eqref{eq:stokes}, it will be labelled U-ROM (resp.\ T-ROM for~\eqref{eq:heat}). Because of the poor performance of T-ROM in terms of speedup (\Cref{Hypo:smallcost} is not respected), the mixed velocity-temperature ROM will not be studied.

An approximate converged solution with its pointwise error map (with respect to the FOM solution) can be seen in~\Cref{fig:solutionU,fig:solutionT}. The magnitude of the error on the pressure field was of the same order. The absolute errors in the vector 2-norm are
\begin{align*}
    \| \mathbf{u}_\mathrm{ROM} - \mathbf{u}_\mathrm{ref} \| = 4.445 \times 10^{-6},\\
    \| \theta_\mathrm{ROM} - \theta_\mathrm{ref} \| = 1.996 \times 10^{-6},
\end{align*}
which are both two orders of magnitude below the set tolerance $\varepsilon = 10^{-4}$.

 \begin{figure}[htbp]
     \centering
     \subfloat[Velocity field and streamlines.]{
    \includegraphics[trim={1in 1.2in 1.2in 1.2in},clip,width=0.45\linewidth]{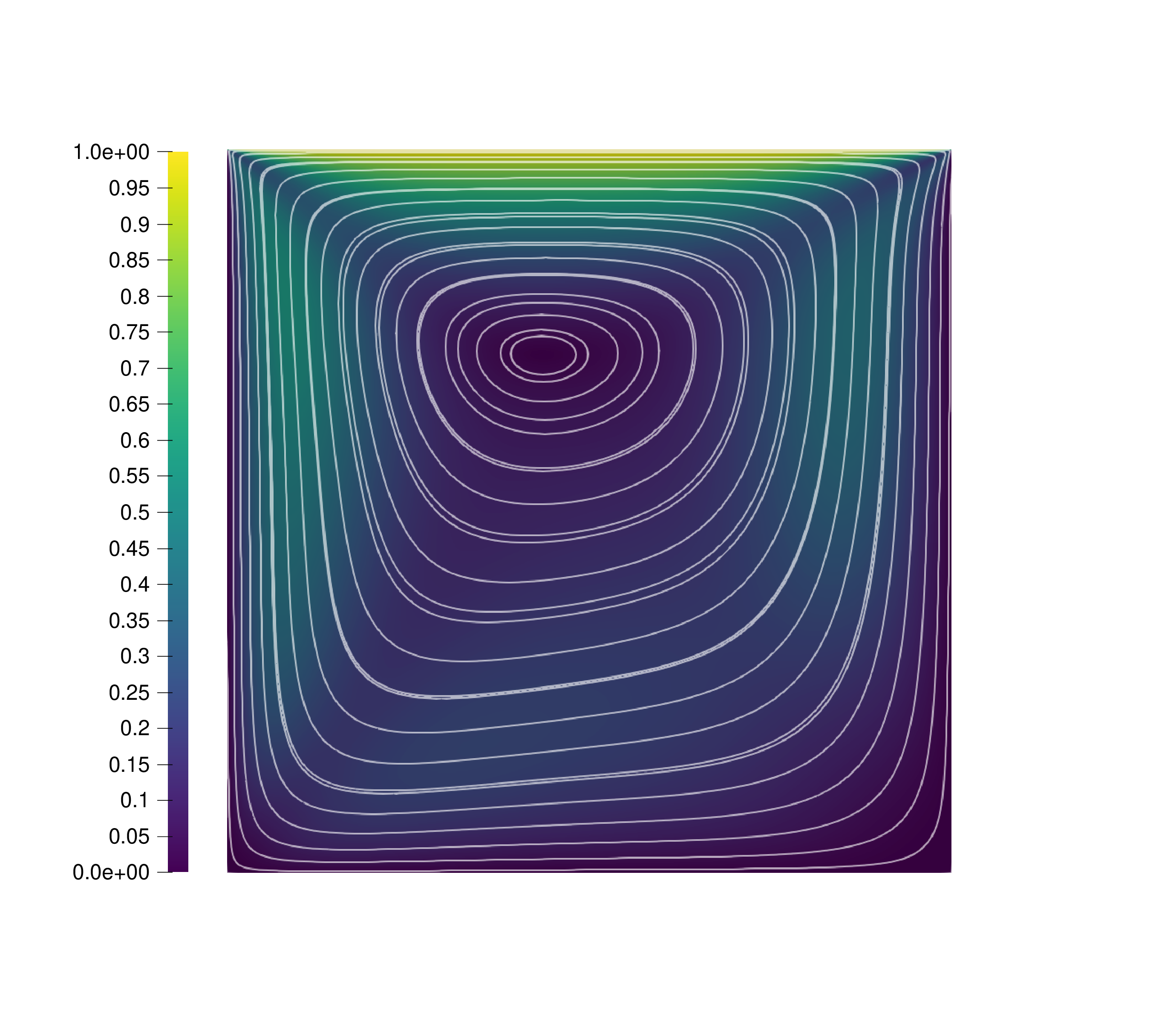}
    }%
     \subfloat[Pointwise error magnitude between the FOM and ROM final velocity field.]{
    \includegraphics[trim={1.2in 1.2in 1.2in 1in},clip,width=0.45\linewidth]{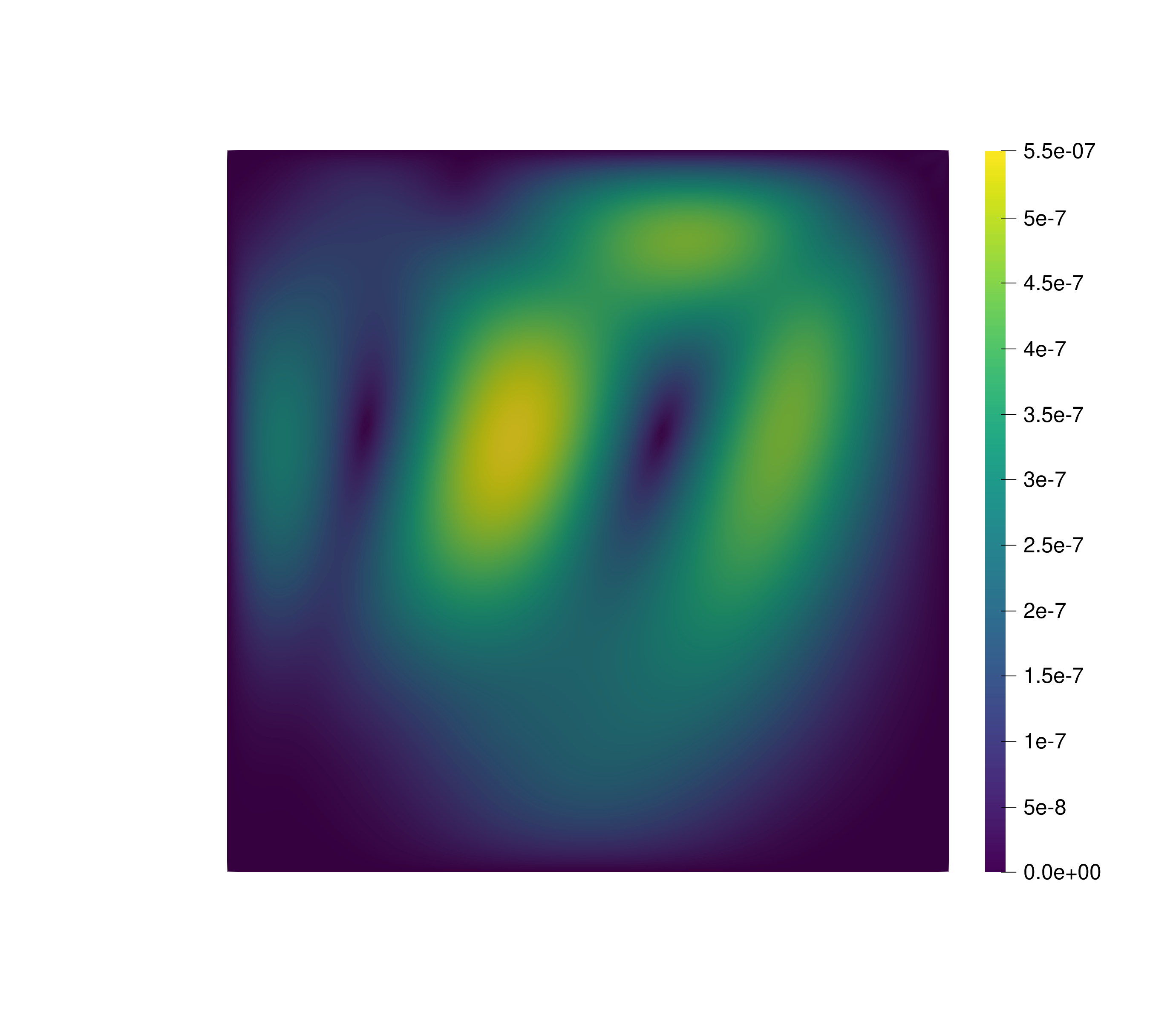}
     }
     \caption{Velocity field and errors w.r.t.\ the reference solution for the solution obtained with the U-ROM-accelerated fixed point scheme with $N_b=20$, $\varepsilon_\mathrm{rb}=10^{-7}$.}%
     \label{fig:solutionU}
 \end{figure}

  \begin{figure}[htbp]
     \centering
     \subfloat[Temperature field and isotherms.]{
    \includegraphics[trim={1in 1.2in 1.2in 1.2in},clip,width=0.45\linewidth]{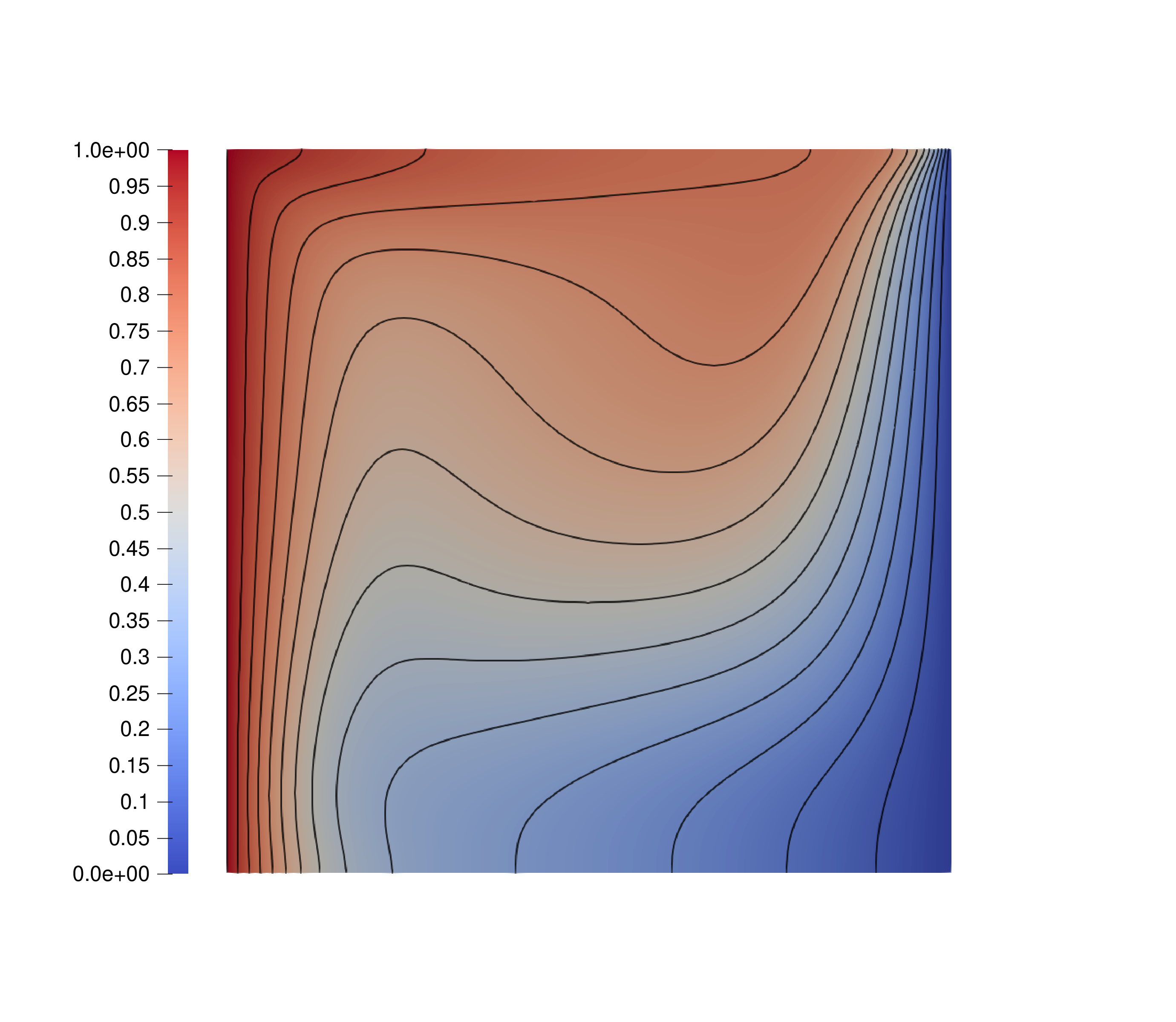}
    }%
     \subfloat[Pointwise error magnitude between the FOM and ROM final temperature field.]{
    \includegraphics[trim={1.2in 1.2in 1.2in 1in},clip,width=0.45\linewidth]{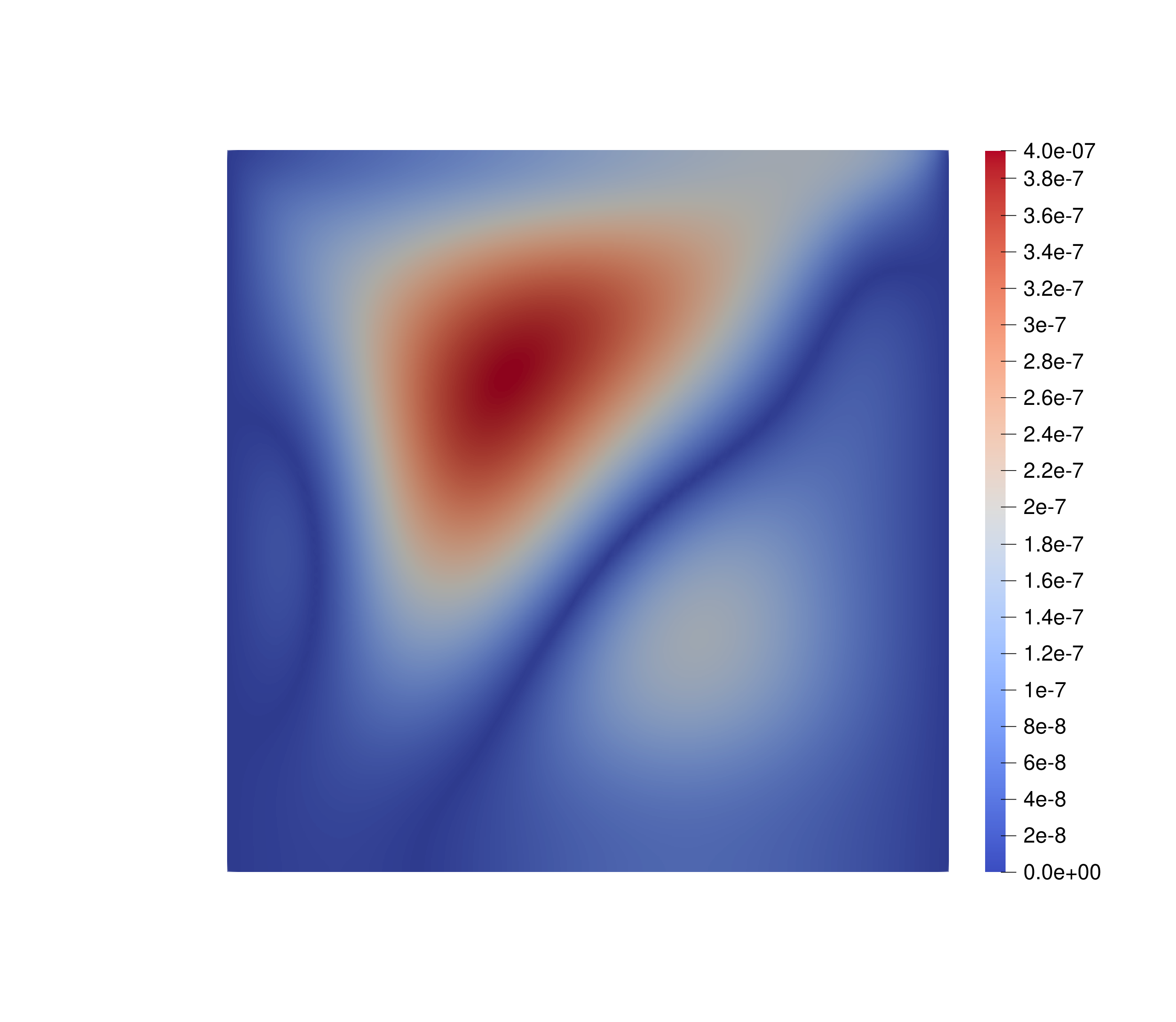}
     }
     \caption{Temperature field and error w.r.t.\ the reference solution for the solution obtained with the U-ROM-accelerated fixed point scheme with $N_b=20$, $\varepsilon_\mathrm{rb}=10^{-7}$.}%
     \label{fig:solutionT}
 \end{figure}

The behaviour of the error estimator along the iterations can be seen in~\Cref{fig:properror1,fig:properror2}. This allows one to visualize when the algorithm uses the FOM (when it is above the $\varepsilon$ line) and the ROM (when it is below). We see that for the U-ROM, the algorithm is very stable to changes in parameters. It always reaches a stable convergence regime where only the ROM is used. For the T-ROM, when decreasing $N_b$ or increasing $\varepsilon_\mathrm{rb}$ too much, convergence is harder to reach and FOM iterations/restarts by the outer validation loop are needed late in the process. \Cref{fig:convrate} shows that for the U-ROM, the convergence rate of the fixed point is maintained ($L\approx 0.8825$). For the T-ROM, we notice that the convergence rate is similar on average but there are more instabilities; a stable convergence regime is not reached, or is reached later in the process. As mentioned earlier, reducing $N_b$ seems to impact a lot the quality of the T-ROM, which can reflect on the stability of the convergence.

The behaviour of the algorithm differs between the two physical systems, but only when the hyperparameters are pushed to extremal values. Throughout its normal operating range the T-ROM converges to the same solution as the high-fidelity scheme, with a final error below the prescribed tolerance: the safeguard of \cref{algo:acceleratedscheme} correctly reverts to the FOM whenever the ROM is inadequate, so accuracy is never compromised. The difference is one of convergence rate. When $N_b$ is reduced or $\varepsilon_\mathrm{rb}$ increased too aggressively, the T-ROM does not settle into a stable regime in which the ROM is used predominantly.
We were not able to determine the mechanism behind this sensitivity, and a full diagnosis lies outside the scope of the present work, whose aim is to introduce and validate the propagation-based estimator rather than to characterize every regime in which acceleration may fail to materialize. We emphasize that this degradation is confined to extremal hyperparameter settings and is not observed for the U-ROM under any configuration tested. Several factors could conceivably contribute to the observed behavior (e.g., the strongly advective character of the temperature equation, known to be challenging for linear ROMs \citep{Geelen_Wright_Willcox_2023}) but we have not confirmed any of these as the cause and draw no conclusion. For these reasons, and because the temperature system is moreover less than half the size of the velocity system (so that limited time gains would be expected in any case), the T-ROM is excluded from the experiments that follow.

% This difference in the behaviour of the algorithm between the two physical systems is not yet fully understood; further investigation would be necessary. However, it might be related to the fact that the second equation has a strong advective component, which is known to perform more poorly with a linear ROM \citep{Geelen_Wright_Willcox_2023}. It might also be that the estimates of the constants are not good enough in the 2-norm for the second equation, and a more appropriate choice of the norm would allow for the estimator to capture the error more precisely. In further experiments, the T-ROM will be left out because of these unexplained instabilities. Moreover, the system is less than half the size of the velocity system, so time gains might not be expected.

\begin{figure}[ht]
    \centering
    \includegraphics[width=0.45\linewidth]{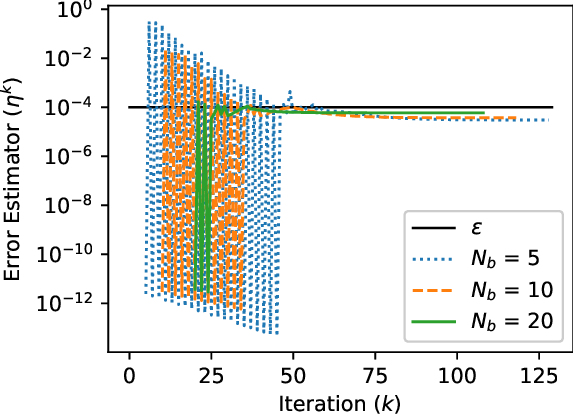}
    \qquad
    \includegraphics[width=0.45\linewidth]{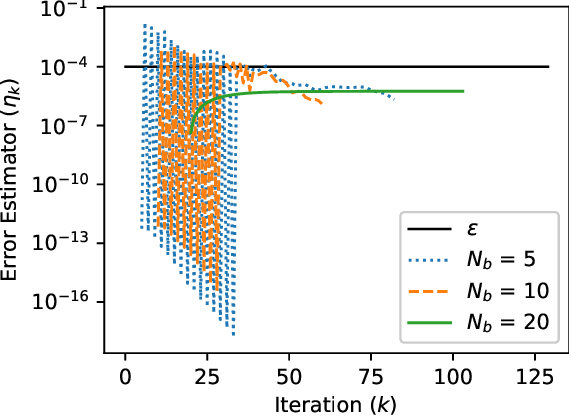}
    \caption{Propagated error $\eta^k$ along iterations for U-ROM (left) and T-ROM (right), for different maximal basis sizes $N_b$. $\varepsilon_\mathrm{rb}=10^{-7}$ is fixed.}%
    \label{fig:properror1}
\end{figure}
\begin{figure}[ht]
    \centering
    \includegraphics[width=0.45\linewidth]{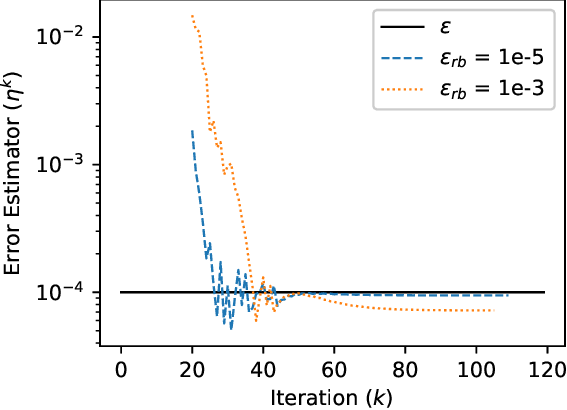}
    \qquad
    \includegraphics[width=0.45\linewidth]{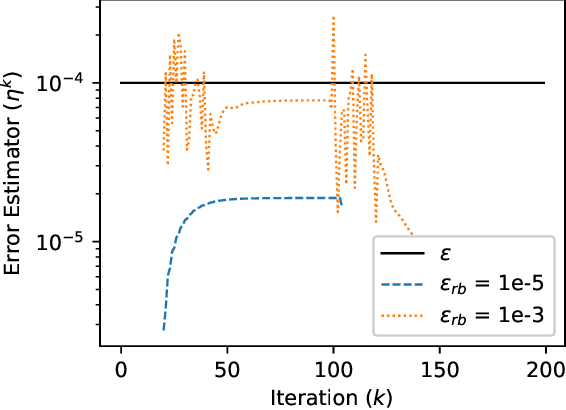}
    \caption{Propagated error $\eta^k$ along iterations for U-ROM (left) and T-ROM (right), for different reduced basis tolerances $\varepsilon_\mathrm{rb}$, with $N_b=20$.}%
    \label{fig:properror2}
\end{figure}

\begin{figure}[ht]
    \centering
    \includegraphics[width=0.45\linewidth]{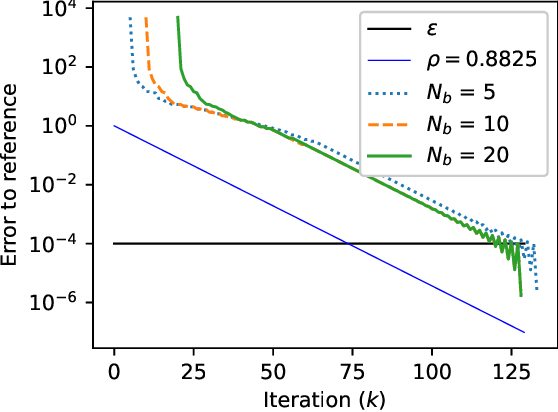}
    \qquad
    \includegraphics[width=0.45\linewidth]{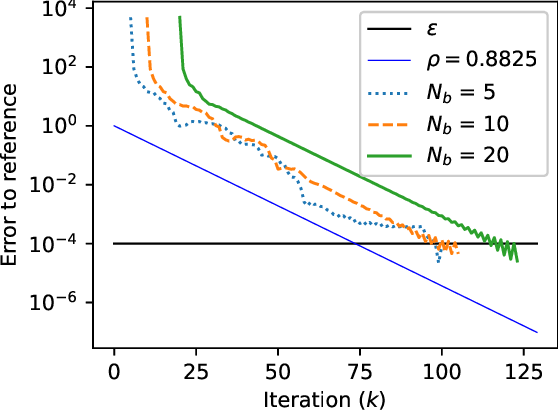}
    \caption{Error with respect to FOM solution along iterations for U-ROM (left) and T-ROM (right), for different reduced basis tolerances $\varepsilon_\mathrm{rb}$. The observed convergence rate for the FOM solution was $L=0.8825$.}%
    \label{fig:convrate}
\end{figure}

 All computation times are obtained by running the different algorithms on the computer cluster Fir (Digital Research Alliance of Canada), on a dedicated node equipped with 192 AMD EPYC 9655 (Zen 5) @ 2.7 GHz processors and 750GB of RAM. The runtimes have been averaged over 30 runs, and bootstrap confidence intervals have been computed. This allows to average out any influence of the cluster's hardware (e.g., disk I/O, network communication). A summary of the global trends is presented here. The main drivers of the computational cost are the FOM solution and the FE matrix assembly; in the case of the U-ROM, the FOM solver is dominant because of the size of the matrix. Performance metrics are shown in~\Cref{fig:speedupsbarplot}. First, it can be noticed that the ROM with $N_b=5$ does not accelerate the solution significantly, or even slows it down, until reaching 16 CPUs. This can be explained by the large number of switches between FOM and ROM iterations needed to capture correctly the trend of the model, which increase the total number of iterations required to reach convergence and the number of matrix assemblies. However, for $N_b=20$, we notice that for 2 or more CPUs, a significant speedup is observed. This means that, starting from 2 CPUs, the overhead cost of matrix assembly becomes less significant than the cost of the direct solver for this problem size. 
 
 \begin{figure}[htbp]
    \centering
    \includegraphics[width=0.45\linewidth]{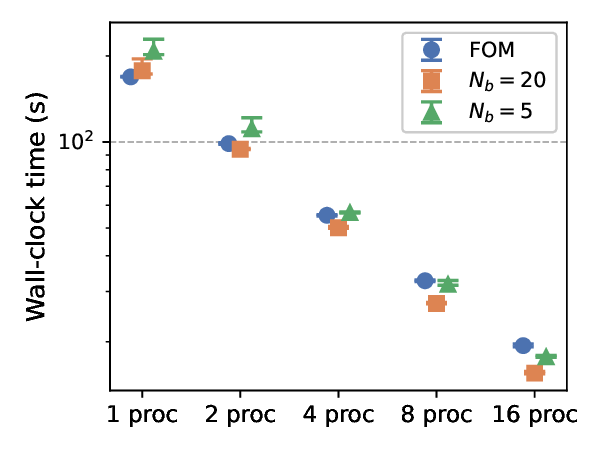}
    \qquad
    \includegraphics[width=0.45\linewidth]{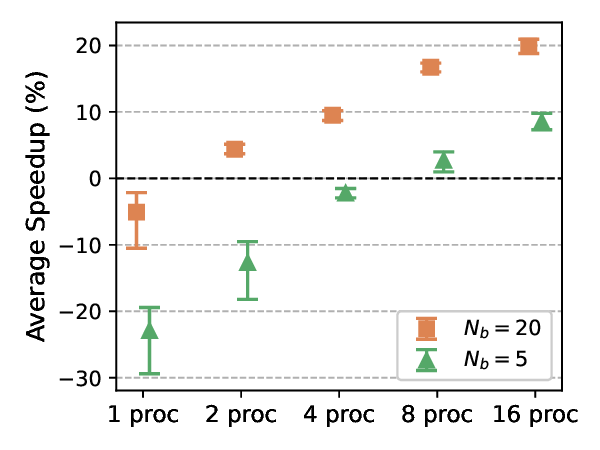}
    \caption{Wall-clock times (left, in log scale) and corresponding average speedups (right) for each variant of the algorithm, on multiple CPUs.}%
    \label{fig:speedupsbarplot}
\end{figure}

 Detailed execution times breakdown are shown in \Cref{tab:perf2cpu,tab:perf16cpu}. ``Proj.'' refers to the projection of the large matrix by the basis matrix $V$, and SVD accounts for incremental and full SVD calls. It can be observed that the incremental SVD approach is very efficient and scales well in parallel, such that the total SVD cost is negligible. However, since the problem is relatively small, the speedups are still limited; the three-dimensional case that will be presented in \Cref{sec:3DCavity} will show that the algorithm performs better when the problem is much more higher-dimensional.

\begin{table}[ht]
    \centering
    \caption{Performance metrics for the fixed point iterations using FOM and ROM for $N_b=5$ and $N_b=20$, with $\varepsilon_\mathrm{rb}=10^{-7}$, when using 2 CPUs.}
    {
    \begin{tabular}{@{}lccccccccc@{}}
        \toprule
         {Method} &  {Metric} &  {Iter.} &  {U/T-Assem.} &  {U/T-FOM} &  {ROM} &  {Proj.} &  {SVD} &  {$P$}\\
        \midrule
          {Ref.} & \makecell{ {\#} \\  {Time (s)}} &
          \makecell{104 \\ 104.3} &
          \makecell{104/104 \\ 35.2/7.83}  &
          \makecell{104/104 \\ 46.6/5.32} &
          --  &
         -- &
         -- &
         --\\
         \midrule
          {$N_b=20$} & \makecell{ {\#} \\  {Time (s)}} &\makecell{109 \\ \textbf{98.7}} &
         \makecell{ 280/105 \\ \textbf{61.4}/7.83 } &
         \makecell{ 27/105 \\ \textbf{12.7}/5.59 } &
         \makecell{ 84 \\ 0.033 }  &
         \makecell{ 84 \\ 2.42 } &
         \makecell{ 6 \\ 0.881 } & 18\\
         \midrule
          {$N_b=5$} & \makecell{ {\#} \\  {Time (s)}} &\makecell{129 \\ \textbf{112.0}} &
         \makecell{ 330/105 \\ \textbf{72.0}/7.93 } &
         \makecell{ 32/105 \\ \textbf{15.2}/5.66 } &
         \makecell{ 99 \\ 0.028 }  &
         \makecell{ 99 \\ 0.836 } &
         \makecell{ 26 \\ 3.37 } & 4\\
         % \midrule
         %  {T-ROM-20} & \makecell{ {\#} \\  {Time (s)}} &\makecell{-- \\ --} &
         % \makecell{ --/-- \\ --/-- } &
         % \makecell{ --/-- \\ --/-- } &
         % \makecell{ -- \\ -- }  &
         % \makecell{ -- \\ -- } &
         % \makecell{ -- \\ -- } & -- & -- \\
         % \midrule
         %  {T-ROM-5} & \makecell{ {\#} \\  {Time (s)}} &\makecell{-- \\ --} &
         % \makecell{ --/-- \\ --/-- } &
         % \makecell{ --/-- \\ --/-- } &
         % \makecell{ -- \\ -- }  &
         % \makecell{ -- \\ -- } &
         % \makecell{ -- \\ -- } & -- & -- \\
         \bottomrule
    \end{tabular}}
    \label{tab:perf2cpu}
\end{table}

\begin{table}[ht]
    \centering
    \caption{Performance metrics for the fixed point iterations using FOM and ROM for $N_b=5$ and $N_b=20$, with $\varepsilon_\mathrm{rb}=10^{-7}$, when using 16 CPUs.}
    {
    \begin{tabular}{@{}lccccccccc@{}}
        \toprule
         {Method} &  {Metric} &  {Iter.} &  {U/T-Assem.} &  {U/T-FOM} &  {ROM} &  {Proj.} &  {SVD} &  {$P$}\\
        \midrule
          {Ref.} & \makecell{ {\#} \\  {Time (s)}} &
          \makecell{104 \\ 21.2} &
          \makecell{104/104 \\ 4.22/0.952}  &
          \makecell{104/104 \\ 11.2/1.77} &
          --  &
         -- &
         -- &
         --\\
         \midrule
          {$N_b=20$} & \makecell{ {\#} \\  {Time (s)}} &\makecell{109 \\ \textbf{17.1}} &
         \makecell{ 280/105 \\ \textbf{7.29}/0.964 } &
         \makecell{ 27/105 \\ \textbf{3.71}/1.79 } &
         \makecell{ 84 \\ 0.014 }  &
         \makecell{ 84 \\ 0.299 } &
         \makecell{ 6 \\ 0.162 } & 18\\
         \midrule
          {$N_b=5$} & \makecell{ {\#} \\  {Time (s)}} &\makecell{129 \\ \textbf{19.3}} &
         \makecell{ 330/105 \\ \textbf{8.64}/0.953 } &
         \makecell{ 32/105 \\ \textbf{4.25}/1.80 } &
         \makecell{ 99 \\ 0.012 }  &
         \makecell{ 99 \\ 0.110 } &
         \makecell{ 26 \\ 0.503 } & 4 \\
         % \midrule
         %  {T-ROM-20} & \makecell{ {\#} \\  {Time (s)}} &\makecell{-- \\ --} &
         % \makecell{ --/-- \\ --/-- } &
         % \makecell{ --/-- \\ --/-- } &
         % \makecell{ -- \\ -- }  &
         % \makecell{ -- \\ -- } &
         % \makecell{ -- \\ -- } & -- & -- \\
         % \midrule
         %  {T-ROM-5} & \makecell{ {\#} \\  {Time (s)}} &\makecell{-- \\ --} &
         % \makecell{ --/-- \\ --/-- } &
         % \makecell{ --/-- \\ --/-- } &
         % \makecell{ -- \\ -- }  &
         % \makecell{ -- \\ -- } &
         % \makecell{ -- \\ -- } & -- & -- \\
         \bottomrule
    \end{tabular}}
    \label{tab:perf16cpu}
\end{table}

\subsection{Comparison of the Error Estimator with the Residual}
\label{section:ROMQM}

The methodology presented in~\Cref{section:methodo} introduces a way to characterize the quality of the ROM at the current iterate, the error estimator $\eta^k$ \eqref{eq:etak}. Other methods can be used as proxies to assess the precision of the ROM, for instance, the residual scaled by the norm of the inverse operator \eqref{eq:errgeneralROM}, or simply the norm of the residual. However, this criterion approximates the error for a single unknown at a given iteration, but does not take into account how much the error propagates from the previous iterations, nor how the error propagates to other unknowns. As seen in the expression of $\eta^k$, if $L$ is close to 1, a non-negligible error component might be carried over from previous iterations. To make the contraction constant larger, a multiplicative factor of $0.95$ is applied to the viscosity $\nu$, which makes the problem stiffer, increasing the convergence rate from $L=0.8825$ to around $L=0.94$ (measured empirically).

When $N_b$ is smaller, a good quality indicator is crucial to detect automatically when to refine the ROM, because when the data are sparse, the error can become larger. To compare the robustness of our estimator $\eta^k$ based on error propagation and the scaled residual \eqref{eq:errgeneralROM}, an experiment is run using $N_b=5$ snapshots and $\varepsilon_\mathrm{rb}=10^{-7}$. We observe what happens to \Cref{algo:acceleratedscheme} when the outer validation loop is not activated (i.e., when convergence with respect to the inexact iterations is observed). Results are shown in \Cref{tab:criterion}. First of all, the pure residual behaves very poorly, with convergence detected despite true errors being orders of magnitude above the required threshold. We observe that for the residual and scaled residual, the true error with respect to the high-fidelity solution is always greater than the set tolerance, which indicates that the convergence was premature. The value of the estimator is always at least one order of magnitude too small, meaning it doesn't give an accurate approximation or upper bound on the actual error. In the case of the propagation-based estimator $\eta^k$, the true error is always below the desired threshold when convergence is attained. Moreover, the value of the estimator itself is a more accurate estimate of the true error. This indicates that measuring the propagation of the ROM error through the iterations and the other unknowns of the system gives a more meaningful and conservative error estimator throughout the iterative process. However, this comes at the cost of a few more iterations and FOM calls.

\begin{table}[ht]
    \caption{Comparison of the residual, scaled residual \eqref{eq:errgeneralROM} and our propagation-based criterion $\eta^k$ as error estimators for the {U-ROM}, with $N_b = 5$ and $\varepsilon_\mathrm{rb}=10^{-7}$.}%
    \centering
    {%\normalsize
    \begin{tabular}{ccccccc}
        \toprule
        Estimator & Abs. Tol. & \#Iter & \#FOM iter. & True Err. & Err. Estim.\\
        \midrule
        Residual &\makecell{ $10^{-2}$\\ $10^{-3}$ \\ $10^{-4}$} & \makecell{ 107 \\ 137 \\ 181} & \makecell{ 6 \\ 6 \\ 11} & \makecell{ 3.08 \\ 3.08 \\ 0.131} & \makecell{ 4.92e-4\\ 4.92e-4 \\ 8.59e-5}\\
        \midrule
        Scaled Residual &\makecell{ $10^{-2}$\\ $10^{-3}$ \\ $10^{-4}$} & \makecell{ 131 \\ 177 \\ 218} & \makecell{ 18 \\ 28 \\ 36} & \makecell{ 1.3e-2 \\ 2.9e-3 \\ 1.85e-4} & \makecell{ 4e-3\\ 4e-4 \\ 7.39e-5}\\
        \midrule
        Propagation & \makecell{ $10^{-2}$\\ $10^{-3}$ \\ $10^{-4}$} & \makecell{ 143 \\ 185 \\ 227} &\makecell{ 29 \\ 37 \\ 45} &\makecell{ 8.72e-3 \\ 8.86e-4 \\ 9.10e-5} & \makecell{ 4.02e-3 \\  9.79e-4 \\ 8.29e-5}\\
         \bottomrule
    \end{tabular}}
    \label{tab:criterion}
\end{table}

\subsection{Speedups for the 3D Extruded Cavity}
\label{sec:3DCavity}

The 2D geometry presented in \Cref{fig:geometry} is extruded along the $z$-axis to make a unit cube. The boundary conditions are kept the same on all extruded faces, with the additional condition $\mathbf{u}_z=0$ on the top face. The problem becomes a 3D heated lid-driven cavity. The tetrahedral mesh is taken to have 50 geometric nodes on each edge, giving a total of 2\,425\,170 DoFs for the velocity and pressure combined, and 775\,607 DoFs for the temperature. When the viscosity \eqref{eq:visco} is multiplied by 5, the problem converges in 27 fixed point iterations to the tolerance $\varepsilon = 10^{-4}$. The results are obtained first using 2$\times$32 CPUs, and then 2$\times$64 CPUs, split across two computing nodes, for U-ROM with $N_b = 5$ and $N_b=10$. Solutions and pointwise errors for $N_b=5$ are shown in \Cref{fig:solutionU3d,fig:solutionT3d}. The maximal errors are of the magnitude of $10^{-9}$.

\begin{table}[ht]
    \centering
    \caption{Performance metrics for the fixed point iterations using FOM and ROM for $N_b=5$ and $N_b=10$, with $\varepsilon_\mathrm{rb}=10^{-7}$, when using 128 CPUs.}
    {
    \begin{tabular}{@{}lccccccccc@{}}
        \toprule
         {Method} &  {Metric} &  {Iter.} &  {U/T-Assem.} &  {U/T-FOM} &  {ROM} &  {Proj.} &  {SVD} &  {$P$}\\
        \midrule
          {Ref.} & \makecell{ {\#} \\  {Time (s)}} &
          \makecell{27 \\ 1133.3} &
          \makecell{27/27 \\ 5.33/0.867}  &
          \makecell{27/27 \\ 1035.9/78.8} &
          --  &
         -- &
         -- &
         --\\
         \midrule
          {$N_b=10$} & \makecell{ {\#} \\  {Time (s)}} &\makecell{28 \\ \textbf{678.1}} &
         \makecell{ 67/28 \\ \textbf{6.92}/0.746 } &
         \makecell{ 15/28 \\ \textbf{574.0}/82.9 } &
         \makecell{ 17 \\ 0.004 }  &
         \makecell{ 17 \\ 0.170 } &
         \makecell{ 4 \\ 0.665 } & 9 \\
         \midrule
          {$N_b=5$} & \makecell{ {\#} \\  {Time (s)}} &\makecell{36 \\ \textbf{713.4}} &
         \makecell{ 83/28 \\ \textbf{8.20}/0.764 } &
         \makecell{ 16/28 \\ \textbf{606.1}/82.0 } &
         \makecell{ 22 \\ 0.004 }  &
         \makecell{ 22 \\ 0.132 } &
         \makecell{ 10 \\ 0.895 } & 4 \\
         \bottomrule
    \end{tabular}}
    \label{tab:perf3D_128cpu}
\end{table}

\Cref{tab:perf3D_128cpu} shows a breakdown of the runtimes for the main sections of the algorithm. Clearly, the main driving cost is the linear solver, and other sections are almost negligible; this is due to the high scalability of assembly with the increasing number of processors, and explains why the speedup is much better than in the 2D case, where just 16 CPUs solved the problem in approximately 20 seconds.

 \begin{figure}[ht]
     \centering
     \subfloat[Velocity field and streamlines.]{
    \includegraphics[trim={1.2in 1in 0.1in 1.2in},clip,width=0.45\linewidth]{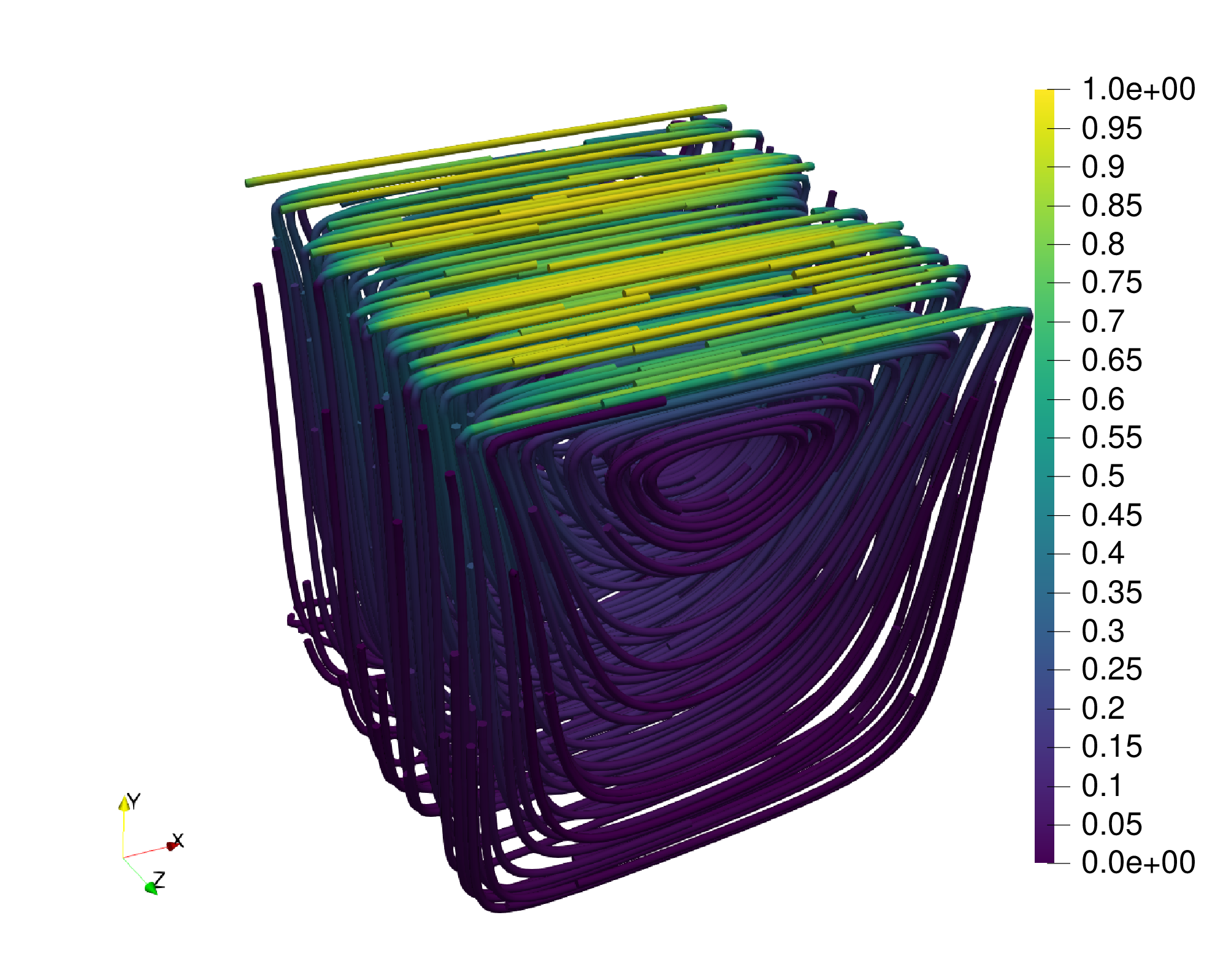}
    }%
     \subfloat[Pointwise error magnitude between the FOM and ROM final velocity field.]{
    \includegraphics[trim={1.2in 1in 0.1in 1.2in},clip,width=0.45\linewidth]{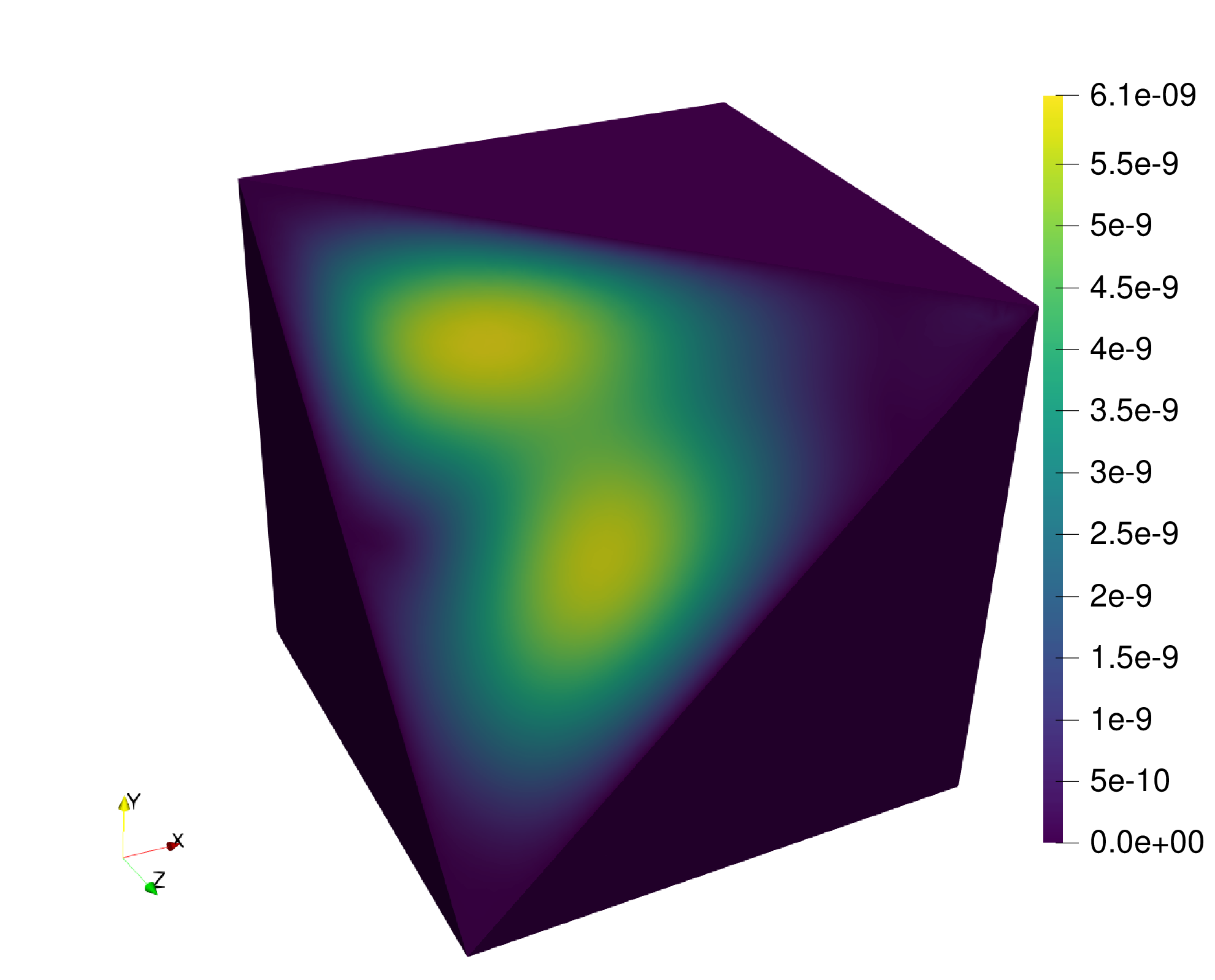}
     }
     \caption{Velocity field and errors w.r.t.\ the reference solution for the solution obtained with the U-ROM-accelerated fixed point scheme with $N_b=5$, $\varepsilon_\mathrm{rb}=10^{-7}$.}%
     \label{fig:solutionU3d}
 \end{figure}

 \begin{figure}[H]
     \centering
     \subfloat[Temperature field and isotherms.]{
    \includegraphics[trim={1.2in 1in 0.1in 1.2in},clip,width=0.45\linewidth]{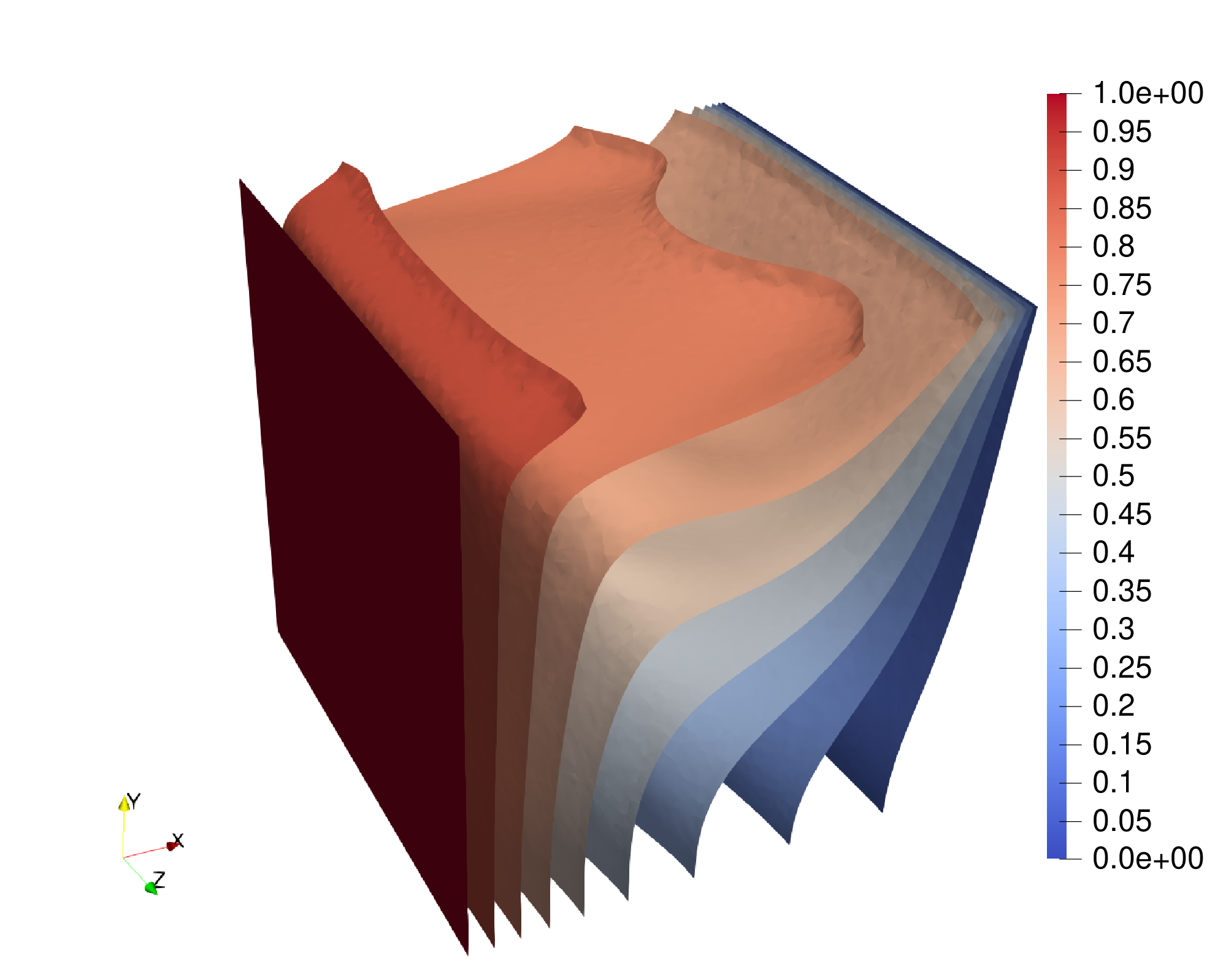}
    }%
     \subfloat[Pointwise error magnitude between the FOM and ROM final velocity field.]{
    \includegraphics[trim={1.2in 1in 0.1in 1.2in},clip,width=0.45\linewidth]{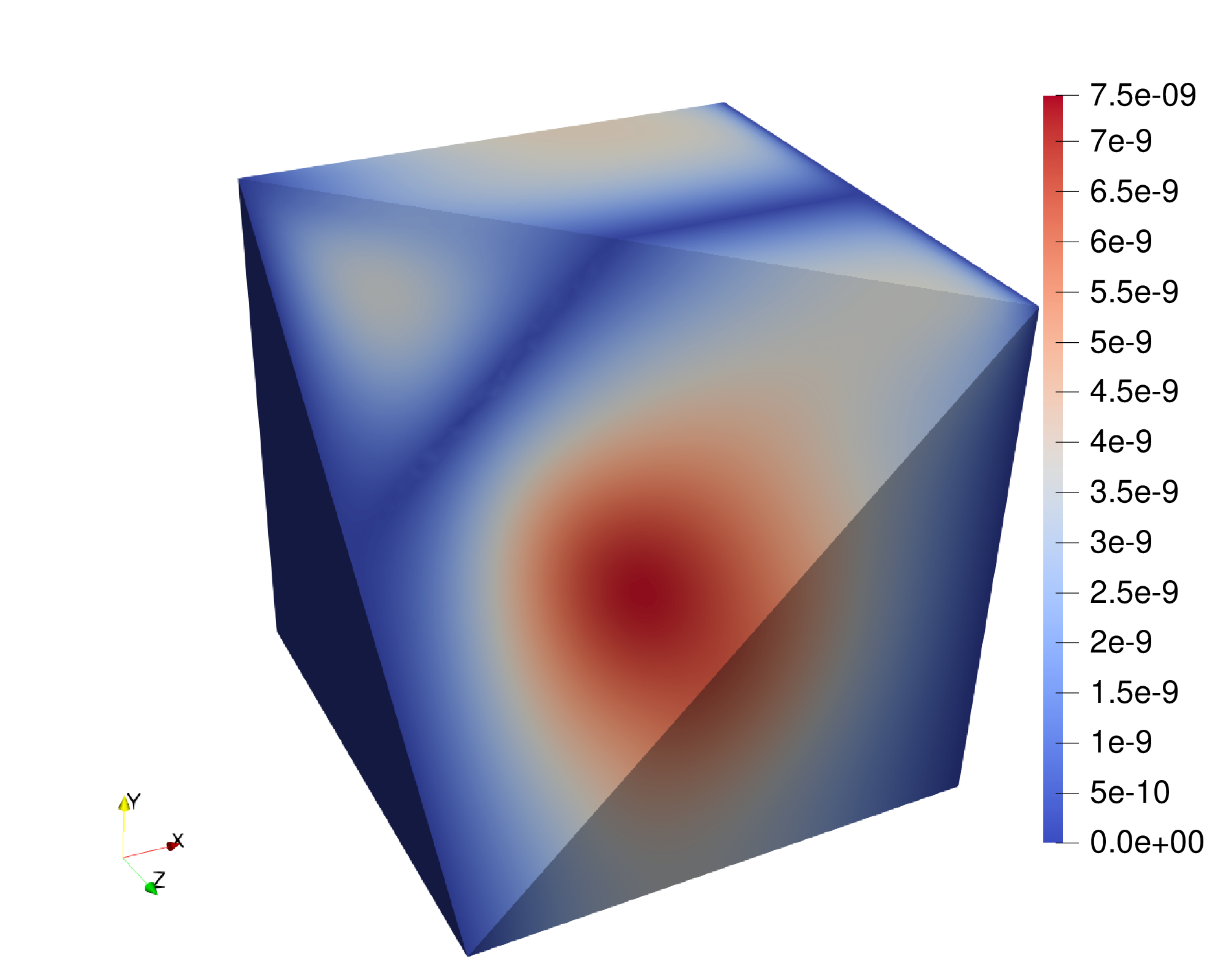}
     }
     \caption{Temperature field and errors w.r.t.\ the reference solution for the solution obtained with the U-ROM-accelerated fixed point scheme with $N_b=5$, $\varepsilon_\mathrm{rb}=10^{-7}$.}%
     \label{fig:solutionT3d}
 \end{figure}

 \begin{figure}[H]
    \centering
    \includegraphics[width=0.45\linewidth]{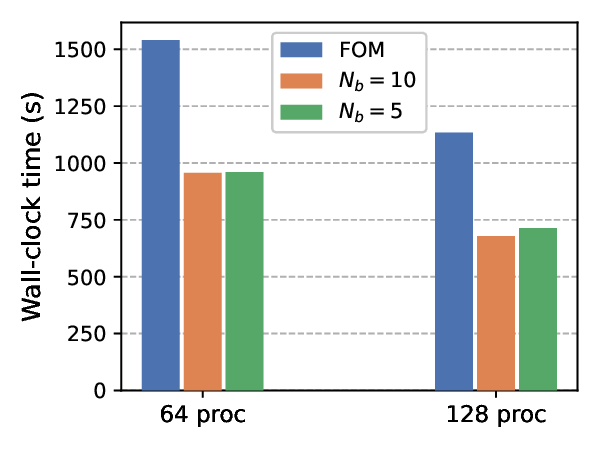}
    \qquad
    \includegraphics[width=0.45\linewidth]{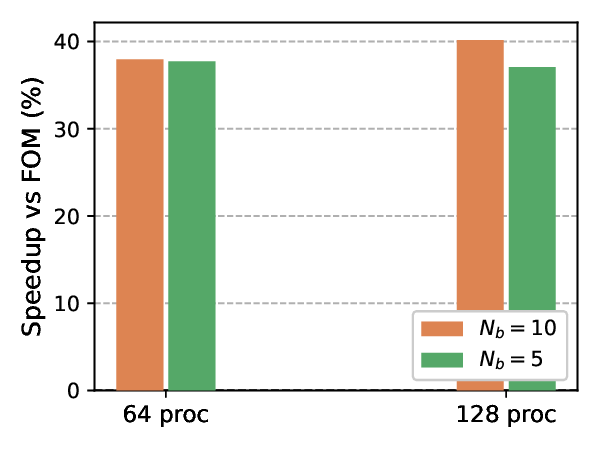}
    \caption{Wall-clock times (left) and corresponding average speedups (right) for each variant of the algorithm for the 3D heated lid-driven cavity.}%
    \label{fig:speedupsbarplot3d}
\end{figure}

The performance statistics are shown in \Cref{fig:speedupsbarplot3d}. It can be seen that $N_b=5$ is a bit slower, because it requires 5 more iterations in total, and one additional FOM solve. Globally, the observed speedups are between 35\% and 40\%, because the main driving cost of the algorithm for the problem size is the linear solver.

\section{Conclusion}

This article presents the formalism for a general accelerated inexact fixed point algorithm with an on-the-fly quality measurement based on error propagation. This method is combined with PROMs to accelerate on-the-fly the solution to PDE-adjacent problems. General convergence results for fixed point iterations depending on the solution of multiple auxiliary equations are given. Error estimates are given in the case where those auxiliary equations are solved inexactly using a PROM, to integrate into the proposed algorithm. Results on a multiphysics nonlinear problem demonstrate that significant speedups can be attained (up to 20\% in the 2D case and 40\% in the 3D case) without sacrificing precision and at no additional offline cost. For the incompressible Navier-Stokes equations at low Reynolds number, the algorithm is stable to changes in hyperparameters like the size of the reduced basis.

There are multiple possible follow-ups to this work. First of all, since the methodology here was inspired by on-the-fly ROM applied to optimization, it would be interesting to apply directly the methodology to a convex optimization problem, or to generalize~\Cref{cor:main} to more general cases of nonlinear and nonconvex optimization (where the contraction principle does not necessarily apply). Moreover, this methodology could be applied to solve unsteady problems. For instance, an accelerated fixed point could be used at each timestep to solve the equations. Because of warm starting (the initial guess for the fixed point is the final solution at the previous timestep), fewer fixed point iterations might be needed to reach convergence, thus potentially limiting the effectiveness of the proposed method. However, one could hope that reusing snapshots from previous time steps might not introduce too much error in the process, and the sampling phase might be skipped altogether, which could compensate for the aforementioned shortcoming. More theoretical analysis should be done to consider the case when the ROM error is propagated through timesteps. Another potential avenue is to reduce the total number of large-scale matrix assemblies, since the assembly phase is the dominant cost in the ROM algorithm, especially for smaller 2D problems; tensor-valued regression or empirical interpolation might be a possible solution to this problem. This would allow us to remove \Cref{Hypo:smallcost} altogether and allow for an efficient application of the algorithm to a broader class of problems. Fixed point methods that are already accelerated by other means, like projection-based coupling schemes~\citep{Deteix_Jendoubi_Yakoubi_2014} or Aitken acceleration~\citep{Irons_Tuck_1969,Vierendeels_Lanoye_Degroote_Verdonck_2007,Ramière_Helfer_2015}, could be further improved using these ideas. There is also theoretical interest in situating this methodology in a larger class of numerical algorithms like Anderson acceleration and Reduced-Rank Extrapolation~\citep{Fang_Saad_2009,Sidi_2016,Pollock_Rebholz_2025} or multipoint methods~\citep{Traub_1964,Petković_Neta_Petković_Džunić_2014}.

\section*{Statements and Declarations}

\subsection*{Declaration of Competing Interest}

The authors declare that they have no competing interests (financial or personal) that could have appeared to influence the research presented in this paper.

\bibliography{references2}

\end{document}